\documentclass[10pt,reqno,letter]{article}

\newcommand {\level}[1]   {^{(#1)}}

\usepackage{mathtools}
\usepackage{pifont}

\usepackage{pdflscape}

\usepackage{extsizes}
\usepackage{blindtext}
\usepackage[T1]{fontenc}
\usepackage{amsthm}
%\linespread{1.5}
\usepackage[utf8]{inputenc}
\usepackage[english]{babel}
\pagestyle{plain}%\usepackage{showframe}

\newcommand{\dd}{{\mathrm{d}}}
\newcommand{\cc}{{\mathsf{C}}}

\newcommand{\dist}{{\mathrm{dist}}}
\newcommand{\diam}{{\mathrm{diam}}}
\newcommand{\radius}{{\mathrm{radius}}}
\newcommand{\pa}{{\mathsf{Alice}}}
\newcommand{\pb}{{\mathsf{Bob}}}
\newcommand{\Leb}{{\mathsf{Leb}}}
\usepackage{hyperref}

\usepackage{tikz-cd}% diagramas
\usepackage{pgfplots}
\usetikzlibrary{babel}
\usepackage{float}
\usepackage{amsthm}
\usepackage{amsmath}\usepackage{amssymb}
\usepackage{commath}
\newtheorem{theorem}{Theorem}
\newtheorem{corollary}{Corollary}

\newtheorem{lemma}{Lemma}
\newtheorem{proposition}{Proposition}

\theoremstyle{definition}
\newtheorem{hypothesis}{Statement}
\newtheorem*{inductive}{Inductive Hypothesis}

\theoremstyle{definition}

\newtheorem{system}{System}

\theoremstyle{remark}
\newtheorem*{remark}{Remark}
\usepackage{pst-node}
\usepackage{tikz-cd}

  \usepackage{tcolorbox}
  \usetikzlibrary{arrows}
\usetikzlibrary{shapes}

\usepackage{tikz}
\usetikzlibrary{decorations.markings}
\usepackage{framed}
\usetikzlibrary{patterns}

\usepackage{tikz}

            \usetikzlibrary{decorations.markings}
\usepackage{pst-plot}
\usepackage{tikz-cd}% diagramas
\usepackage{pgfplots}

\usepackage{pgffor}

\usepackage{enumerate}
\title{McMullen's game for equicontinuously-twisted badly approximable points in continued fractions and beta expansions}
\title{McMullen’s game for equicontinuously-twisted badly
approximable points in continued fractions and beta expansions}
\author{David Lambert \\
University of North Texas, 1155 Union Cir
\#311430\\ Denton, TX 76203-5017.
\and David Simmons \\
434 Hanover Ln, \\ Irving, TX 75062.
\and Jiajie Zheng \\
University of North Texas, 1155 Union Cir
\#311430\\ Denton, TX 76203-5017.} 

\usetikzlibrary{decorations.pathreplacing,decorations.markings,calc,arrows.meta,bending}

\usepackage{circuitikz}

\usepackage{pgfplots}

  \usetikzlibrary{shapes.geometric}
\usetikzlibrary{angles,quotes}
\begin{document}

\newcommand{\sss}{{\mathsf{s}}}
\newcommand{\eee}{{\mathsf{e}}}
\newcommand{\wa}{{\mathcal{W}}}
\newcommand{\intt}{{\mathsf{Int}}}
\newcommand{\cb}{{\overline{B}}}

\maketitle

\begin{abstract}
    In a $\beta$-transformation system for an integer $\beta>0$ or the Gauss map system, given a sequence of functions $\left(f_n:[0,1]\to [0,1]\right)_{n=1}^\infty$ consider the set 
    \begin{equation*}
        \mathfrak{BA}\left(\left(f_n\right)_n\right):=\left\{x\in X:\, \liminf_{n\to\infty}\dist\left(T^n x,f_n(x)\right)>0\right\},
    \end{equation*}
    which generalizes the nonrecurrent set (by taking $f_n$'s to be identity functions) and the non-shrinking-target set (by taking $f_n$'s to be constant functions, which generalizes the set of badly approximable numbers). It has been shown if $\left(f_n\right)_n$ is a sequence of the same constant functions, then $\mathfrak{BA}\left(\left(f_n\right)_n\right)$ is winning in the sense of McMullen's game, which implies it is dense and full Hausdorff dimensional everywhere. In this paper, we strengthen this result by extending $\left(f_n\right)_n$ to be any equicontinuous sequence. 
\end{abstract}

\setcounter{section}{-1}

\section{Introduction}

\textsl{Poincar\'{e} Recurrence Theorem} (see e.g. \cite[Theorem 2.11]{ergodictheorynum}) is one of the most fundamental results in ergodic theory, and it asserts that for a metric measure preserving system $(X,\dist,\mathcal{B},\mu,T)$ (i.e., $(X,\dist)$ is a metric space, $\mathcal{B}$ is a Borel $\sigma$-algebra of the metric space $X$, $T:X\to X$ is a measurable function $\mu$ is a $T$-invariant probability measure) with countable base, $\mu$-almost all points return close to themselves under the iterations of $T$ in the sense that 
\begin{equation*}
    \mu\left(\mathfrak{R}\right)=1,\quad \text{where } \mathfrak{R}:=\left\{x\in X: \liminf_{n\to\infty}\dist \left(T^nx,x\right)=0\right\}
\end{equation*}
(here and hereafter $\dist\left(\circ,\circ\right)$ denotes the usual Euclidean distance when $X=[0,1]$). 

A closely related topic is the \textsl{shrinking target problem}, which studies the orbits accumulating near a given point. In any metric measure preserving system $(X,\dist,\mathcal{B},\mu,T)$ where $T$ is ergodic, by Birkhoff's Ergodic Theorem (see e.g. \cite[Theorem 2.30]{ergodictheorynum}), for any fixed target $y\in X$, the orbit of a $\mu$-generic point accumulates around $y$; namely, \begin{equation}\label{shrinkingtargetresult}
  \forall\, y\in X,\quad   \mu\left(\mathfrak{A}(y)\right)=1,\quad \text{where }\mathfrak{A}(y):=\left\{x\in X: \liminf_{n\to\infty}\dist \left(T^nx,y\right)=0\right\}. 
\end{equation}  
In particular, in \cite{metricnumber} Philipp\footnote{The results in \cite{metricnumber} are stronger; it concerns sets $\overline{\mathfrak{A}}(y,\psi):=\limsup_{n\to\infty}\left\{x\in X:\, \dist \left(T^nx,y\right)<\psi(n)\right\}$ for a positive sequence $\psi$. An analogous results for recurrence concerning the sets $\overline{\mathfrak{R}}(\psi):=\limsup_{n\to\infty}\left\{x\in X:\, \dist \left(T^nx,x\right)<\psi(n)\right\}$ can be found in \cite{simmonsrecurrence} and that for twisted recurrence can be found in \cite{kz}.} focused on the shrinking targets in the following systems 
\renewcommand{\thesystem}{\Roman{system}}
\begin{framed}
\begin{system}
 Let $\gamma>1$.   $T:[0,1]\to [0,1],x\mapsto \gamma x\mod{1}$. 
\end{system}

\begin{system}
    $T:[0,1]\to [0,1], x\mapsto \begin{cases}
        0 & \text{if }x=0,\\
        \frac{1}{x} \mod{1} &\text{otherwise}. 
    \end{cases}$
\end{system}
\end{framed}
\noindent with the usual distances and invariant Borel measures $\mu$. 
The sets $\mathfrak{A}(y)$ of points whose orbits accumulate around a fixed point $y$ are of interests in Diophantine number theory. For example, it is well known (see e.g. \cite[Theorem 23]{khinchincontinuedfrac} that the set of \textsl{badly approximable numbers}
\begin{equation*}
    \mathfrak{BA}:=\left\{x\in [0,1]:\, \exists c>0 \text{ such that }\left|x-\frac{p}{q}\right|>\frac{c}{q^2}\, \forall q\in \mathbb{N}, p\in \mathbb{Z}\right\}
\end{equation*}
is equal to the set of numbers whose partial denominators in their continued fraction expansions are bounded, namely $\mathfrak{A}(0)^\cc$ in System II; i.e., 
\begin{equation*}
    \mathfrak{BA}=\left\{x\in X: \liminf_{n\to\infty}\dist \left(T^nx,0\right)>0\right\}=\mathfrak{A}(0)^\cc, \quad \text{where }(X,T)\text{ is defined as in System II}. 
\end{equation*}
Moreover, let $\gamma>1$ be an integer. A real number $x$ is called \textsl{normal} to base $\gamma$ if 
for any interval $I\subset [0,1]$ with $\gamma$-adic rational endpoints and the Lebesgue measure $\Leb$, 
\begin{equation*}
    \lim_{n\to \infty}\frac{1}{n}\left|\left\{0\leq k\leq n-1:\, T^k x\in I\right\}\right|=\Leb(I),\quad \text{where }(X,T)\text{ is defined as in System I}. 
\end{equation*}
Equivalently, a number $x$ is normal to $\gamma$ iff every word of $n$ digits from $\{0,1,\ldots,b-1\}$ occurs in the $\gamma$-expansion of $x$ with asymptotic frequency of $\frac{1}{\gamma^n}$ as $n\to \infty$. In System I, every number in $\mathfrak{A}(0)^\cc$ has a uniform bound on the number of consecutive digit-$0$ in its base-$\gamma$ expansion. Clearly, such numbers are not normal, so $\mathfrak{A}(0)^\cc \subset \mathfrak{BA}$.\par 
Additionally, classic results (see \cite{schmidtdiophan}) have shown that  
\begin{theorem}\label{classicthm1}
\begin{enumerate}
    \item The Hausdorff dimension of $\mathfrak{A}(y)^\cc$ in System I is equal to $1$ for all $y\in [0,1]$
    \item The Hausdorff dimension of $\mathfrak{A}(0)^\cc$ in System II is equal to $1$.
\end{enumerate}
\end{theorem}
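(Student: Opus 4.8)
Since every set in the statement is a subset of $[0,1]\subset\mathbb{R}$, the inequality $\dim_H\le 1$ is automatic, so in both parts the only task is $\dim_H\ge 1$. The plan is to write each set as an increasing union of compact "uniformly-away-from-the-target" sets and to show their dimensions tend to $1$. Fixing the system $(X,T)$ and the target $y$, put
\[
E_\varepsilon:=\left\{x\in X:\ \dist\left(T^nx,y\right)\ge\varepsilon\ \text{ for all } n\ge 0\right\},
\]
a compact set, with $\mathfrak{A}(y)^\cc=\bigcup_{\varepsilon>0}E_\varepsilon$; by countable stability of Hausdorff dimension, $\dim_H\mathfrak{A}(y)^\cc=\sup_{\varepsilon>0}\dim_H E_\varepsilon$, so it suffices to prove $\dim_H E_\varepsilon\to 1$ as $\varepsilon\to 0^+$. (In System II with $y=0$, $E_\varepsilon$ is, up to an ambiguity at rationals that is irrelevant for dimension, the set of $x$ whose partial quotients are bounded by $\lfloor 1/\varepsilon\rfloor$, so part 2 reduces to Jarník's statement that $\dim_H\{x:\ \sup_n a_n(x)\le M\}\to 1$ as $M\to\infty$.) In each case I would obtain the lower bound on $\dim_H E_\varepsilon$ by exhibiting a Cantor subset of $E_\varepsilon$ built from the natural symbolic coding of the system and estimating its dimension from below.

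\textbf{System I (integer $\gamma=b$).} The generation-$N$ cylinders are the $b$-adic subintervals of $[0,1)$ of length $b^{-N}$, and $T^n$ shifts the $b$-adic expansion, so $T^nx$ lies in the generation-$N$ cylinder labelled by the digit block $a_{n+1}\cdots a_{n+N}$ of $x$. Choose $N=N(\varepsilon)$ minimal with $b^{-N}<\varepsilon$; then $\ball(y,\varepsilon)$ meets at most $q\le 2b+2$ of the $b^N$ generation-$N$ cylinders, and I let $\mathcal{A}$ be the set of digit blocks of the remaining "good" cylinders. If every length-$N$ window of the expansion of $x$ lies in $\mathcal{A}$, then $T^nx\notin\ball(y,\varepsilon)$ for all $n$, so the projection to $[0,1]$ of the subshift $\Sigma_\varepsilon\subseteq\{0,\dots,b-1\}^{\mathbb{N}}$ defined by this window constraint lies in $E_\varepsilon$ (up to a countable set of endpoints). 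This projection is the limit set of a self-similar IFS with all ratios $b^{-1}$ satisfying the open set condition, hence has dimension $h_{\mathrm{top}}(\Sigma_\varepsilon)/\log b$; and counting admissible words by concatenating good $N$-blocks and correcting for the $O(q)$ windows straddling a junction gives a transfer-matrix bound $h_{\mathrm{top}}(\Sigma_\varepsilon)\ge\log b-O\!\left(q\,b^{-N}\right)$, which tends to $\log b$ since $q\,b^{-N}=O(\varepsilon)$. Therefore $\dim_H E_\varepsilon\ge 1-O(\varepsilon)\to 1$. (For non-integer $\gamma$ one runs the identical argument on the \emph{full} cylinders of the greedy $\beta$-expansion, using that a positive density of generation-$N$ cylinders are full and that $T^n$ is affine on each of them.)

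\textbf{System II.} Here $T$ is not piecewise linear, so exact self-similarity is replaced by the Rényi/Gauss--Kuzmin bounded-distortion estimates: the cylinder $I(a_1,\dots,a_n)$ has length comparable, with a universal constant, to $\prod_{k\le n}a_k^{-2}$, and $T^n$ maps it onto $[0,1]$ with derivative of bounded oscillation. With $M=\lfloor 1/\varepsilon\rfloor$, the set $\mathfrak{BA}_{\le M}:=\{x:\ 1\le a_n(x)\le M\ \forall n\}\subseteq E_\varepsilon$ is the limit set of the conformal IFS $\{\,x\mapsto 1/(a+x):1\le a\le M\,\}$ on $[0,1]$, which satisfies the open set condition, and by Bowen's formula its dimension $s_M$ is the unique zero of the topological pressure $P_M(s)$ of the potential $-s\log|T'|$ restricted to the digits $\le M$. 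To finish I would show $s_M\to 1$: the functions $P_M$ increase with $M$ to the pressure $P_\infty$ of the full Gauss map, which satisfies $P_\infty(1)=0$ since the Gauss measure realizes the pressure there (Rokhlin's formula gives $h_{\mu_G}=\int\log|T'|\,d\mu_G$); and each $P_M$ is continuous and strictly decreasing with $P_M(1)<0$, while $P_M(1)\uparrow P_\infty(1)=0$, so the zero $s_M$ of $P_M$ satisfies $s_M\uparrow 1$. (Alternatively one avoids thermodynamic formalism and runs a mass-distribution/Frostman argument, placing a carefully normalized Bernoulli-type measure on $\mathfrak{BA}_{\le M}$ and bounding its local dimension from below, using that the $\mu_G$-measure of the set of digits exceeding $M$ is $O(1/M)$.)

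The genuine difficulty in both parts is the same quantitative point: deleting the $O(1)$ cylinders meeting the $\varepsilon$-ball around the target — a set of relative density $O(\varepsilon)$, resp. of $\mu_G$-measure $O(1/M)$ — must cost only $o(1)$ in Hausdorff dimension, uniformly; in System I this is the entropy/transfer-matrix estimate for the associated subshift of finite type, and in System II it is the continuity and monotonicity of the pressure $P_M$ in $M$ (equivalently, a Frostman bound with an explicitly controlled Hölder exponent for the natural measure on $\mathfrak{BA}_{\le M}$). The remaining ingredients — the reduction by countable stability, the symbolic coding, and the bounded-distortion estimates for the Gauss map — are standard. I note that the classical source \cite{schmidtdiophan} instead deduces both statements from the stronger fact that $\mathfrak{A}(y)^\cc$ is a winning set for Schmidt's $(\alpha,\beta)$-game, together with the general principle that winning sets have full Hausdorff dimension; the Cantor-set route above is more elementary but yields the same conclusion.
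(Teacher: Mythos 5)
The paper does not actually prove Theorem~\ref{classicthm1}: it is quoted as a classical result from \cite{schmidtdiophan}, and within the paper's logical chain it is subsumed by Theorem~\ref{classicthm2} together with the general principle that winning sets in Ahlfors regular spaces have full Hausdorff dimension --- a dependence you correctly flag in your closing remark. Your proposal is therefore a genuinely different, direct route. The reduction to showing that the Hausdorff dimension of $E_\varepsilon$ tends to $1$ via countable stability is clean, and the System~II argument via Bowen's formula together with the monotone convergence $P_M(1)\uparrow P_\infty(1)=0$ is the standard rigorous proof of Jarník's theorem. The only place needing more care is the System~I entropy estimate $h_{\mathrm{top}}(\Sigma_\varepsilon)\ge\log b-o(1)$ for the sliding-window subshift: the conclusion is true, but ``correcting for the $O(q)$ windows straddling a junction'' is precisely where the work lies, and since $q$ can be as large as $2b+2$ a naive union bound over straddling positions does not close; a fully rigorous version needs a Perron--Frobenius lower bound on a suitably high power of the $b^{N-1}\times b^{N-1}$ transfer matrix, or a Goulden--Jackson-type cluster expansion, to obtain an entropy loss that vanishes with $q b^{-N}$. (One should also be a bit careful about the digit-sequence ambiguity at $b$-adic rationals when passing between the symbolic model and $E_\varepsilon$, though this affects only a countable set.) The tradeoff between the two routes is as expected: the game-theoretic argument proves the strictly stronger winning property and gives robustness under countable intersection for free, which is what the paper needs for its main theorem; your construction is more elementary and self-contained but delivers only the dimension statement and requires separate quantitative estimates in each system.
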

 In particular, Theorem \ref{classicthm1} implies that $\mathfrak{BA}$ and the collection of numbers not normal to a fixed integer base $\gamma$ have full Hausdorff dimension. \par 
 An upgrade to Theorem \ref{classicthm1} is to consider the subset's winning/losing property in \textsl{Schmidt's game}, which was introduced in \cite{schmidtsgame}. Schmidt showed in \cite{schmidtsgame} that 
\begin{theorem}\label{classicthm2}
\begin{enumerate}
    \item For all integers $\gamma>1$, the set $\mathfrak{A}(y)^\cc$ in System I is $\alpha$-winning for all $0<\alpha<1/2$. 
    \item The set $\mathfrak{A}(0)^\cc$ in System II is $\alpha$-winning for all $0<\alpha<1/2$.
\end{enumerate}
\end{theorem}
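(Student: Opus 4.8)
The plan is, for each $\alpha\in(0,\tfrac12)$ and each $\beta\in(0,1)$, to exhibit a winning strategy for the ``target'' player (say $\pa$) in Schmidt's $(\alpha,\beta)$-game with target set $\mathfrak{A}(y)^\cc$ (resp.\ $\mathfrak{A}(0)^\cc$). Recall the game: $\pb$ opens with a ball $B_1$ of radius $\rho_1$, then the players alternately choose nested balls $B_1\supseteq A_1\supseteq B_2\supseteq A_2\supseteq\cdots$ with $\radius(A_k)=\alpha\,\radius(B_k)$ and $\radius(B_{k+1})=\beta\,\radius(A_k)$, so that $\radius(B_k)=(\alpha\beta)^{k-1}\rho_1=:\rho_k\to0$; $\pa$ wins iff the unique point $x_\infty\in\bigcap_kB_k$ lies in the target set, and the set is $\alpha$-winning iff such a strategy exists for every $\beta$. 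In both systems I would reformulate membership in the target set as the avoidance, at every large scale, of a short list of ``forbidden points'', and then spend each $\pa$-move dodging the few forbidden points that first become relevant at the current scale. This is possible because $\alpha<\tfrac12$ leaves room to slide $A_k$ into the half of $B_k$ away from a prescribed point, at distance $\ge(1-2\alpha)\rho_k$ from it; the sub-ball $A_k$ can in fact dodge any $\lfloor\alpha^{-1}\rfloor-1$ or so points in $B_k$ (pigeonhole on the gaps), but near the threshold $\alpha=\tfrac12$ it can only dodge one, which is why the bookkeeping below insists on \emph{at most one new forbidden point per move}.

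\textbf{System II.} By the identity $\mathfrak{A}(0)^\cc=\mathfrak{BA}$ recalled above, it suffices to force $|x_\infty-p/q|\ge c\,q^{-2}$ for all $p/q$ and a fixed $c=c(\alpha,\beta)>0$. The forbidden points are the rationals, stratified by denominator, and the only arithmetic input is $|p/q-p'/q'|\ge(qq')^{-1}$ for distinct fractions, so the rationals of denominator $\le Q$ are $Q^{-2}$-separated. At $\pa$'s $k$-th move put $Q_k:=\lfloor(8\rho_k)^{-1/2}\rfloor$, so that $B_k$, of diameter $2\rho_k\le\tfrac14Q_k^{-2}$, contains at most one rational $p_k/q_k$ with $q_k\le Q_k$. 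The rationals of denominator $\le Q_{k-1}$ were kept at distance $\ge c\,q^{-2}$ from $A_{k-1}\supseteq B_k\supseteq A_k$ by the previous move (inductive hypothesis), so the only possibly-new obstacle is $p_k/q_k$ when $Q_{k-1}<q_k\le Q_k$. Sliding $A_k$ to the far end of $B_k$ from it gives $\dist(A_k,p_k/q_k)\ge(1-2\alpha)\rho_k$, and since $q_k^{-2}\le Q_{k-1}^{-2}$ is of order $\rho_{k-1}=(\alpha\beta)^{-1}\rho_k$, this exceeds $c\,q_k^{-2}$ once $c$ is small in terms of $\alpha,\beta$; the finitely many rationals of denominator $\le Q_k$ outside $B_k$ stay far from $A_k$ for the same reason. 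By induction the invariant $\dist(A_k,p/q)\ge c\,q^{-2}$ ($q\le Q_k$) is maintained, and since $Q_k\to\infty$ every rational is eventually covered, so $x_\infty\in\mathfrak{BA}$ and $\pa$ wins for every $\beta$.

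\textbf{System I.} Since every branch of $T^{n}$ is affine with slope $\gamma^{n}$, one has $\dist(T^{n}x,y)\ge\varepsilon\iff\dist(x,L_n)\ge\varepsilon\gamma^{-n}$ for small $\varepsilon$, where $L_n:=T^{-n}\{y\}$ is a $\gamma^{-n}$-separated subset of $[0,1]$; thus it suffices to force $\dist(x_\infty,L_n)\ge c\gamma^{-n}$ for all large $n$. When $y$ is a $T$-periodic point --- in particular $y=0$, or any $\gamma$-adic rational --- the sets $\{L_n\}$ are nested, and the argument runs exactly as above: letting $N_k$ be the largest $n$ with $\gamma^{-n}\ge8\rho_k$, the set $L_{N_k}$ is $8\rho_k$-separated, so $B_k$ meets it in at most one point, which $\pa$ dodges to distance $\ge(1-2\alpha)\rho_k\ge c\gamma^{-N_k}$ (for small $c$, using $\gamma^{-N_k}<8\gamma\rho_k$); by nesting this simultaneously controls $L_n$ for all $n\le N_k$, and since the gaps $N_k-N_{k-1}$ are bounded by $M:=1+\lceil\log_\gamma(1/\alpha\beta)\rceil$, one gets $\dist(T^{n}x_\infty,y)\ge c\gamma^{-M}>0$ for all $n\ge N_1$, so $x_\infty\in\mathfrak{A}(y)^\cc$. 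For a general target $y$ the family $\{L_n\}$ is no longer nested, so one works instead with $\widehat L_N:=\bigcup_{n\le N}L_n$ (which is nested); the scheme survives provided $\widehat L_{N_k}$ is still separated by a fixed multiple of $\rho_k$, i.e.\ provided distinct preimages $(y+j)/\gamma^{n}$ and $(y+j')/\gamma^{n'}$ stay apart by a scale-respecting amount --- which holds, with a constant depending only on the denominator, whenever $y$ is rational, and more generally when $y$ is Diophantine. Granting this, one again faces only boundedly many new points per move, and the same bookkeeping (dodge the new points, inherit the fixed margin on the old ones, absorb the bounded per-move gap into a smaller $c$) goes through.

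The step I expect to be the main obstacle is precisely this last separation/bookkeeping point for general $y$: one must show that, however fast the balls contract (however small $\beta$), only a bounded number of scales ``pile up'' between consecutive moves and the forbidden points at those scales are sparse enough in the current $B_k$ to be dodged all at once by a sub-ball of radius $\alpha\rho_k$. Since such a sub-ball can avoid only about $\alpha^{-1}$ points, this forces one to organize the forbidden points by a height-type parameter carrying a Dirichlet-type separation inequality, exactly as the denominator does in the $\mathfrak{BA}$ case; everything else --- the affine-branch reformulation, the ``slide to one side'' move using $\alpha<\tfrac12$, and the monotonicity that lets old constraints ride along for free --- is routine.
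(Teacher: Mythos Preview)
The paper does not prove Theorem~\ref{classicthm2} directly; it attributes the result to Schmidt and then supersedes it with the stronger absolute-winning statement (Theorem~\ref{classicthm3}, itself a special case of the main Theorem~\ref{mainthm}). So the only comparison available is between your Schmidt-game sketch and the paper's McMullen-game proof in Section~\ref{secproof}, which works rather differently: it tracks the \emph{images} $T^{n-1}B_1^{(n)}$ level by level, classifies levels into Types I/II/III, and uses the expanding property to guarantee that Type-III levels (where all outstanding forbidden points can be cleared in a bounded number of moves) recur within $\lambda$ steps. This bypasses entirely the separation-of-preimages issue you flag.

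Your System~II argument is correct and is essentially Schmidt's original proof for $\mathfrak{BA}$. For System~I your nested-$L_n$ argument is fine when the $L_n$ are genuinely nested (e.g.\ $y=0$). But the passage to general $y$ has a real gap, and your proposed patch does not close it. You assert that for rational or Diophantine $y$ the set $\widehat L_N=\bigcup_{n\le N}L_n$ is separated at scale comparable to $\rho_k\asymp\gamma^{-N_k}$; this fails for irrational $y$. Two cross-level preimages $(y+j)/\gamma^{n}$ and $(y+j')/\gamma^{n'}$ with $n<n'\le N$ differ by $\lvert y(\gamma^{n'-n}-1)-\text{integer}\rvert/\gamma^{n'}$, and even for badly approximable $y$ the numerator is only of order $\gamma^{-(n'-n)}$, giving separation of order $\gamma^{-2N}$, not $\gamma^{-N}$. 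Thus $B_k$ can contain arbitrarily many points of $\widehat L_{N_k}$, and since near $\alpha=\tfrac12$ you can dodge only one point per move, the ``at most one new forbidden point per move'' invariant collapses precisely in the regime $\alpha\beta<1/\gamma$ (small $\beta$) where several dynamical levels elapse between consecutive moves. For Liouville $y$ there is no usable lower bound at all. You correctly identify this as the main obstacle; what is missing is a mechanism that either amortizes the backlog of levels without letting it grow, or --- as the paper does --- controls the images $T^{n}B$ directly rather than the preimage sets $T^{-n}\{y\}$.
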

We will define $\alpha$-winning in Section \ref{secgame}. This property is strictly stronger than full Hausdorff dimension, see \cite{schmidtsgame}, so Theorem \ref{classicthm2} strengthens Theorem \ref{classicthm1}. Note the fact proved by Schmidt \cite{schmidtsgame} that a countable intersection of $\alpha$-winning sets is also $\alpha$-winning, so the collection of numbers normal to no base is also $\alpha$-winning for all $0<\alpha<1/2$ and in particular it has Hausdorff dimension $1$ as a consequence.

A modification of Schmidt's game was introduced by McMullen in \cite{mcmullengame} which is later been refered to as \textsl{McMullen's game}. 
\begin{theorem}\label{classicthm3}
\begin{enumerate}
    \item For all integers $\gamma>1$, the set $\mathfrak{A}(y)^\cc$ in System I is $\beta$-absolute winning for all $\beta\in \left(0,\frac{1}{3}\right)$. 
    \item The set $\mathfrak{A}(0)^\cc$ in System II is $\beta$-absolute winning for all $\beta\in \left(0,\frac{1}{3}\right)$.
\end{enumerate}
\end{theorem}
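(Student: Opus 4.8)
The plan is to reduce both parts to two basic closure properties of the family of $\beta$-absolute winning sets, proved by McMullen in \cite{mcmullengame}: any superset of a $\beta$-absolute winning set is $\beta$-absolute winning, and, for a fixed $\beta$, a countable intersection of $\beta$-absolute winning sets is $\beta$-absolute winning. Fix $\beta\in\left(0,\frac13\right)$ and the target $y$. For a small parameter $\epsilon>0$ to be chosen, put
\[
  D_n:=T^{-n}\!\left(\ball(y,\epsilon)\right),\qquad E_\epsilon(y):=\bigcap_{n\ge1}D_n^{\cc}=\left\{x\in X:\ \dist\!\left(T^nx,y\right)\ge\epsilon\ \text{for all }n\ge1\right\}.
\]
Since $x\in E_\epsilon(y)$ forces $\liminf_{n\to\infty}\dist(T^nx,y)\ge\epsilon>0$, we have $E_\epsilon(y)\subseteq\mathfrak{A}(y)^{\cc}$; so by the superset property it is enough to show $E_\epsilon(y)$ is $\beta$-absolute winning for some $\epsilon=\epsilon(\beta)>0$, and by the intersection property it is enough to show each $D_n^{\cc}$ is $\beta$-absolute winning, with one value of $\epsilon$ serving all $n$ at once. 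This is the modular structure I would follow; note that the per-level decomposition is precisely what sidesteps the scheduling difficulty one meets in a ``direct'' single strategy (several scales becoming critical at the same move).

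The engine is that, for small $\epsilon$, the forbidden set $D_n$ is a union of extremely short intervals which are well separated on the scale of an $n$-th level cylinder. In System~I the map $T^n$ is affine with slope $\gamma^n$ on each $\gamma$-adic cylinder $\left[j\gamma^{-n},(j+1)\gamma^{-n}\right)$, so $D_n$ is a disjoint union of at most $\gamma^n$ intervals, one inside each cylinder at relative position $y$, each of length at most $2\epsilon\gamma^{-n}$, any two separated by a gap of at least $(1-2\epsilon)\gamma^{-n}$; in particular a ball of radius $<\gamma^{-n}/4$ can meet at most one of them. In System~II (where $y=0$) I would use bounded distortion of the Gauss map: there is a constant $K\ge1$, independent of $n$, by which $T^n$ distorts ratios of lengths on any continued-fraction $n$-cylinder $C$ by at most $K$, so $D_n\cap C$ is a single sub-interval of $C$ of length at most $K\epsilon|C|$, lodged against the end of $C$ at which the $(n+1)$-st partial quotient tends to $\infty$; equivalently $D_n=\{x:\ a_{n+1}(x)>A\}$ with $A\asymp1/\epsilon$, occupying only a fraction $\asymp\epsilon$ of each cylinder.

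Alice's strategy in the $\beta$-absolute game for $D_n^{\cc}$ is then this. When no interval of $D_n$ meets Bob's current ball $B_k$ (radius $\rho_k$), Alice plays the central deletion, the ball of radius $\beta\rho_k$ concentric with $B_k$; this is legal, and because $\beta\le\frac13$ it forces $\rho_{k+1}\le\frac{1-\beta}{2}\rho_k$, so $\rho_k\to0$. (For System~II she additionally plays deletions of radius $\beta\rho_k$ pressed against the ``deep'' endpoint of the cylinder currently containing $B_k$; since $B_m\subseteq B_{k+1}$ for all $m>k$, one such deletion keeps Bob's ball permanently at a definite positive distance from that endpoint, so after finitely many of them Bob's ball is confined to a single $n$-cylinder $C$, bounded away from its deep end, at a scale at which Alice's allowed deletion can cover $D_n\cap C$.) The moment $B_k$ is small enough that $B_k\cap D_n$ is contained in one interval $J$ with $|J|\le2\beta\rho_k$ --- which, once $\epsilon$ is no larger than a suitable $\epsilon_0(\beta)>0$ (one may take $\epsilon_0=\beta^2/4$ in System~I, and $\epsilon_0$ of order $\beta^2/K^2$ in System~II), occurs as soon as $\rho_k<\gamma^{-n}/4$ in System~I and as soon as $B_k$ sits inside such a small $n$-cylinder in System~II --- Alice deletes the ball of radius $\beta\rho_k$ covering $J\cap B_k$. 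Then $B_{k+1}\subseteq B_k\setminus J$, and as $B_m\subseteq B_{k+1}$ for $m>k$ we get $B_m\cap D_n=\varnothing$ from then on, so the outcome point lies in $D_n^{\cc}$. This establishes each $D_n^{\cc}$ as $\beta$-absolute winning with the single choice $\epsilon=\epsilon_0(\beta)$, and the two reductions complete the proof for all $\beta\in\left(0,\frac13\right)$, in both systems.

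The elementary case is System~I, where everything is uniform and the inequality $\epsilon\le\beta^2/4$ falls straight out of the radius bound $\rho_k\ge\beta\rho_{k-1}\ge\beta\gamma^{-n}/4$ at the step where Alice acts, together with the separation of the $D_n$-intervals. I expect the main obstacle to be the Gauss-map geometry: both the bounded-distortion description of $D_n$ and, more delicately, the book-keeping that prevents Bob from evading Alice --- with only one deletion per move --- by hovering near a rational that is the accumulation point of infinitely many ever-smaller cylinders. This is exactly the phenomenon already handled in Schmidt's and McMullen's proofs that the badly approximable numbers form a winning set, so the technique is available; carrying it out here uniformly in $n$ is the crux.
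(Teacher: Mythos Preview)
Your approach is sound and genuinely different from the paper's. The paper does not give a direct proof of Theorem~\ref{classicthm3}; it is recorded as a known classical fact, and it also drops out as the special case $f_n\equiv y$ of the main Theorem~\ref{mainthm}. The paper's proof of Theorem~\ref{mainthm} builds a \emph{single} strategy for Alice that handles all iterates $n$ at once: it relabels Bob's balls by ``levels'', classifies each level as Type~I/II/III according to how many order-$n$ vertices it contains and how large $T^{n-1}B_1^{(n)}$ is, and threads a delicate inductive bookkeeping (Statements~\ref{s1}--\ref{ih.3}, Lemma~\ref{lemmanoconsec2}) through the play so that the \emph{*-vertices} $T^{-(n-1)}(f_{n-1}(c^{(n-1)}))$ get deleted in a controlled schedule. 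No countable-intersection reduction is used.

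Your route---take $\epsilon=\epsilon(\beta)$, write $E_\epsilon(y)=\bigcap_n D_n^{\cc}$, and win on each $D_n^{\cc}$ separately via McMullen's intersection stability---is cleaner for this constant-target statement, and your System~I analysis (central waiting moves to force $\rho_k\to 0$, then a single deletion once $\rho_k<\gamma^{-n}/4$, with the threshold $\epsilon\le\beta^2/4$) is correct. The trade-off is scope: your decomposition relies on $D_n$ being a \emph{fixed} set, independent of the outcome point, which is exactly what fails once $f_n$ is nonconstant (then the ``target'' at level $n$ depends on $x$, so there is no set $D_n$ to intersect over). The paper's unified strategy is heavier precisely because it must track moving targets via the centers $c^{(n)}$; for Theorem~\ref{classicthm3} alone your argument is the more economical one.

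For System~II your sketch is honest about where the work lies. One point worth flagging: to show each $D_n^{\cc}$ is $\beta$-absolute winning you must force Bob's ball into a single order-$n$ cylinder, and for the Gauss map this is not a finite problem---already $D_1=\{x:a_2(x)>A\}$ accumulates at every $1/k$, so a single deletion at the ``deep'' endpoint does not finish the job. You will need to iterate the deep-end deletions through the orders $1,2,\dots,n$ (each deletion confines Bob to one order-$j$ cylinder away from its deep end, after which the next order becomes the issue), and check that the bounded-distortion constant does not degrade with $n$. This is exactly the mechanism the paper encodes in its Type~II/Type~III dichotomy and in Lemma~\ref{lemmanoconsec2}; carrying it out for fixed $n$ is routine but not a one-liner.
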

We will define $\beta$-absolute-winning in Section \ref{secgame}. $\beta$-absolute-winning for all $\beta\in \left(0,\frac{1}{3}\right)$ is strictly stronger than $\alpha$-winning for all $\alpha \in \left(0,\frac{1}{2}\right)$, see \cite{mcmullengame}, so Theorem \ref{classicthm3} strengthens Theorem \ref{classicthm2}.

Given a metric measure preserving system $(X,\dist,\mathcal{B},\mu,T)$, an effort to unify the settings of recurrence and shrinking targets is to consider a \textsl{twist}, namely a Borel measurable function $f:X\to X$. For a sequence of functions $\left(f_n:X\to X\right)_n$, define the set of \textsl{$\left(f_n\right)_n$-twisted-badly-approximable} points for $T$ by 
\begin{equation*}
\mathfrak{BA}\left(\left(f_n\right)_n\right):=\left\{x\in X:\, \liminf_{n\to\infty}\dist \left(T^n x,f_n(x)\right)>0\right\}. 
\end{equation*}
The recurrence and shrinking targets settings correspond to the twists $\left(f_n:X\to X\right)_n$ being the identity functions and constant functions respectively. 
It was shown in \cite[Corollary 2.4]{kz} that if $T$ is ergodic  and $\mu$ has full support, then $\mathfrak{BA}\left(\left(f_n\right)_n\right)$ has $\mu$-measure $0$. However, our result will show that $\mathfrak{BA}\left(\left(f_n\right)_n\right)$ is ``big'' in terms of its fractal dimensions and games, despite being null in invariant measures.  Here is our main result: 
\begin{theorem}\label{mainthm}
    In Systems I and II, for any equicontinuous sequence of functions $\left(f_n:[0,1]\to [0,1]\right)_n$, the set $\mathfrak{BA}\left(\left(f_n\right)_n\right)$ is $\beta$-absolute winning for all $\beta\in \left(0,\frac{1}{3}\right)$. 
\end{theorem}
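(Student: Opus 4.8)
\noindent\emph{Proof strategy.} The plan is to fix $\beta\in\left(0,\tfrac13\right)$ and build a winning strategy for $\pa$ in the $\beta$-absolute game with target $S:=\mathfrak{BA}\left(\left(f_n\right)_n\right)$, by grafting an ``equicontinuity correction'' onto the strategy already known to win when the $f_n$ are all one and the same constant function. Throughout, $B_i$ and $\rho_i:=\radius(B_i)$ denote $\pb$'s balls, $A_i$ (with $\radius(A_i)\le\beta\rho_i$) denote $\pa$'s holes, $B_{i+1}\subseteq B_i\setminus A_i$, $\rho_{i+1}\ge\beta\rho_i$, and $x:=\bigcap_i B_i$. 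First I would isolate the structural input shared by both systems: a nested family of Markov partitions into \emph{rank-$n$ cylinders}, intervals $I$ on which $T^{n}$ is a homeomorphism onto $[0,1)$ whose distortion is bounded by an absolute constant $K$ (affine maps, $K=1$, for System~I with integer $\gamma$; R\'{e}nyi's bounded--distortion property for the Gauss map). Only two consequences are used: that $(T^{n}|_{I})^{-1}\!\left(\ball(y,r)\right)$ is an interval of radius comparable, up to the factor $K$, to $r\,\radius(I)$; and that a ball whose radius is comparable to a rank-$n$ cylinder meets only boundedly many of them. In System~I the rank-$n$ cylinders all have radius $\tfrac12\gamma^{-n}$, so this is immediate; in System~II I would have $\pa$ simultaneously run the known winning strategy for $\mathfrak{A}(0)^{\cc}$, which pins every partial quotient of $x$ below a fixed $M=M(\beta)$, after which the rank-$n$ cylinders actually visited by $\pb$ have comparable radii (call the scale $\ell_n$, so $\ell_n\asymp\gamma^{-n}$ in System~I) and any ball meets $O_M(1)$ of them.

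Next I would bring in equicontinuity in the only form needed: for a suitably small $\varepsilon>0$ there is an $\eta>0$ such that $\diam(E)<\eta$ forces $\sup_n\diam\left(f_n(E)\right)<\varepsilon$ --- with a single $\eta$ valid for all $n$ at once. $\pa$ begins with a finite ``burn-in'': she plays concentric holes $A_i=\ball\left(\mathrm{centre}(B_i),\beta\rho_i\right)$, each of which forces $\rho_{i+1}\le\tfrac{1-\beta}{2}\rho_i$, until $\rho_i<\eta$; thereafter $\rho_i<\eta$ for all $i$, so for every later ball $B$ and every $x\in B$ one has $\dist\left(f_n(x),f_n(\mathrm{centre}(B))\right)<\varepsilon$ for \emph{all} $n$. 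Hence it suffices to arrange that, for all but finitely many $n$, some ball $B$ with $x\in B$ has $T^{n}x\notin\ball\left(f_n(\mathrm{centre}(B)),3\varepsilon\right)$: this yields $\dist(T^{n}x,f_n(x))\ge 2\varepsilon$ for all large $n$, so $\liminf_n\dist(T^{n}x,f_n(x))\ge2\varepsilon>0$ and $x\in S$.

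The core of the strategy would then process the scales $n=N_0,N_0+1,\dots$ in order, where $N_0$ is least with $\ell_{N_0}<\eta$; the finitely many coarser scales are left unhandled, which is harmless for a $\liminf$, and whenever $\pa$ has no pending scale she plays a concentric hole, so $\rho_i\to0$ and every scale is eventually reached. To handle scale $n$, $\pa$ waits until $\rho_i$ first drops to within a small fixed constant multiple of $\ell_n$; then $\rho_i\asymp\ell_n$ and $B_i$ meets at most two adjacent rank-$n$ cylinders, so one move --- a $\beta\rho_i$-ball about the unique rank-$n$ cylinder endpoint inside $B_i$ (if any) --- forces $\pb$ into a single rank-$n$ cylinder $I_n$. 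With $c_n$ the centre of $\pb$'s ball at the next stage, a second move deletes the interval $(T^{n}|_{I_n})^{-1}\!\left(\ball\left(f_n(c_n),3\varepsilon\right)\right)$, whose radius is comparable to $\varepsilon\,\ell_n$ up to the factor $K$; the smallness of $\varepsilon$ makes this at most $\beta$ times the current radius, hence a legal move. From then on every later ball --- and so $x$ --- avoids that interval, i.e.\ $T^{n}x\notin\ball(f_n(c_n),3\varepsilon)$, which is precisely the condition demanded above for this $n$. Running this for all $n\ge N_0$ delivers $x\in S$ and wins the game.

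The step I expect to be the real obstacle is not any single item but their interplay, in two respects. First, one must check that $\pa$ never runs out of moves --- that the ``windows'' during which consecutive scales can be processed do not overrun one another (and, in System~II, that they coexist with the moves keeping the partial quotients bounded); this is the same delicate scale-counting that already underlies the constant-target theorem, and I would import it from there rather than reprove it. Second --- and this is the genuinely new point --- one must keep the error committed by reading the moving target $f_n(\cdot)$ off the centre of a \emph{single} ball below $\varepsilon$ \emph{simultaneously across all of the infinitely many scales}; this is exactly where the \emph{uniform} modulus of equicontinuity is indispensable (pointwise continuity of each individual $f_n$ would not suffice), and it is the one place where the hypothesis on $\left(f_n\right)_n$ is actually consumed. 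Everything else is a routine adaptation of the known constant-target argument.
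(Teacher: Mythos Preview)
Your high-level plan---read the moving target off the centre $c_n$ of $\pb$'s current ball, have $\pa$ avoid the preimages of $\ball\!\left(f_n(c_n),3\varepsilon\right)$, and only at the very end invoke equicontinuity to pass from $f_n(c_n)$ to $f_n(x)$---is exactly the paper's (Proposition~\ref{Iresultprop} together with Section~\ref{finishproof}). The divergence is in how the avoidance is organised scale by scale.

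The ``two moves per scale'' scheme you describe does not survive the obstacle you yourself flag: for $\gamma=2$ and any $\beta\in(0,1/3)$ one has $\beta^{2}<1/\gamma$, so after processing scale $n$ in two moves $\pb$ may legally shrink to radius $\beta^{2}\rho_i$, overshooting the window for scale $n{+}1$ entirely. You are right that the constant-target proof faces the same counting problem, but it does not solve it by a two-move-per-scale scheme either---it \emph{batches}. The paper makes this explicit: levels are classified as Type~I/II/III, and the (at most~$\lambda$) target preimages accumulated since the previous Type-III level are cleared all at once at the next Type-III level, in $\lceil\log_2\lambda\rceil$ moves via halving ($\pa$ deletes a ball around one pending point; whichever complementary component $\pb$ chooses contains at most half of the remainder). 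So ``import the constant-target scale-counting'' amounts to ``import the batching,'' which is most of the real work. For System~II you take a genuinely different tack---interleave the known $\mathfrak{A}(0)^{\cc}$ strategy so that all partial quotients stay below some $M(\beta)$---whereas the paper never forces bounded partial quotients: it lets the Type~II/III threshold $\beta^{2}/\bigl(4(N^{(n)})^{2}\bigr)$ depend on the local digit $N^{(n)}$ and works directly with bounded distortion. Your route can be made rigorous, but simply alternating moves between the two strategies is not enough: the $\mathfrak{A}(0)^{\cc}$ strategy must have forced $a_n\le M$ \emph{before} the avoidance step at scale $n$, since bounding the partial quotients of the \emph{outcome} alone says nothing about the sizes of the rank-$n$ cylinders that $\pb$'s balls meet \emph{during} the game.
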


The structure of the paper is as follows. In Section \ref{secgame}, we will introduce Schimdt's game and McMullen's game, and some important properties of winning sets in either game. In Section \ref{secproof}, we will prove the main result Theorem \ref{mainthm} by induction: in Section \ref{secproof1}, we will set up the notations and definitions that we will use throughout the proof, and the inductive hypotheses will be established; in Section \ref{secproof2} and \ref{secproof3}, we will construct the strategies for $\beta$-winning in McMullen's game for all $\beta\in \left(0,\frac{1}{3}\right)$ for System I and System II respectively, and the inductive hypothese will be verified; in Section \ref{finishproof}, we will finish the proof. 

\section{Schmidt's game and McMullen's absolute game}\label{secgame}

For this article to be self-contained, we define \textsl{Schmidt's game} and \textsl{McMullen's game}, which are used to state Theorems \ref{classicthm2} and \ref{classicthm3} and our main result Theorem \ref{mainthm}. Let $(X,d)$ be a complete metric space.\par 
For any pair $(z,\rho)\in X\times (0,\infty)$, define the \textsl{closed ball} $B(z,\rho)$ to be the set
\begin{equation*}
    B(z,\rho):=\left\{x\in X:\, d(z,r)\leq \rho\right\}. 
\end{equation*}\par 
 Schmidt's game was first introduced by Schmidt in \cite{schmidtsgame} and since then this infinite game has been used as a notion of fractal dimensions (for example, see \cite{danischmidt,toralgame,modifiedscgame,dkwbschmidtgame,normaltonobase,fractalgame,markovgame}).  To define Schmidt's game, let $\alpha,\beta\in (0,1)$ be constants. Consider two players $\pa$ and $\pb$. $\pb$ and $\pa$ take turns to pick elements $(z_k,r_k)$ and $(\zeta_k,\rho_k)$ of $X\times (0,\infty)$ respectively, such that 
\begin{equation*}
    B(z_1,r_1)\supset B(\zeta_1,\rho_1)\supset B(z_2,r_2) \supset \cdots B(z_k,r_k)\supset B(\zeta_k,\rho_k)\supset B(z_{k+1},r_{k+1})\supset \cdots 
\end{equation*}
and 
\begin{equation*}
    \rho_k=\alpha r_k\quad\text{and}\quad r_{k+1}=\beta \rho_k. 
\end{equation*}
A subset $S$ of $X$ is called 
\begin{itemize}
    \item \textsl{$(\alpha,\beta)$-winning} given constants $\alpha,\beta\in (0,1)$ if $\pa$ can always set up her plays, regardless of how $\pb$ plays, so that $\bigcap_{k=1}^\infty B(z_k,r_k)\cap S\neq \varnothing$, and
    \item \textsl{$\alpha$-winning} given a constant $\alpha\in (0,1)$ if $S$ is $(\alpha,\beta)$-winning for all $\beta\in (0,1)$, and 
    \item \textsl{winning} if $S$ is $\alpha$-winning for some $\alpha\in (0,1)$. 
\end{itemize}
It is easy to observe that 
\begin{proposition}\label{secgameprop1}
    If $S\subset X$ is winning, then $S$ is dense in $X$. 
\end{proposition}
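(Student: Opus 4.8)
The plan is to use the fact that in Schmidt's game Bob moves first and may choose his opening ball completely freely, so he can trap the entire play inside any prescribed ball. Fix an arbitrary $x \in X$ and $\varepsilon > 0$; it suffices to exhibit a point of $S$ inside $B(x,\varepsilon)$. Since $S$ is winning, there is some $\alpha \in (0,1)$ with $S$ being $\alpha$-winning, hence $(\alpha,\beta)$-winning for every $\beta \in (0,1)$; pick any such $\beta$ and work in the corresponding $(\alpha,\beta)$-game.

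I would then have Bob open with $(z_1,r_1) = (x,\varepsilon)$, so that $B(z_1,r_1) = B(x,\varepsilon)$, and let him continue arbitrarily thereafter. Because $S$ is $(\alpha,\beta)$-winning, Alice has a strategy guaranteeing $\bigcap_{k=1}^\infty B(z_k,r_k) \cap S \neq \varnothing$ against every play of Bob, in particular against every completion of this opening. Any point $p$ in that intersection satisfies $p \in S$ and $p \in B(z_1,r_1) = B(x,\varepsilon)$, so $d(x,p) \le \varepsilon$. As $x$ and $\varepsilon$ were arbitrary, $S$ meets every ball of $X$ and is therefore dense.

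There is essentially no obstacle here; the only subtlety worth stating carefully is that the winning hypothesis ``Alice succeeds no matter how Bob plays'' is being instantiated at a Bob who has committed to the stated first move, which is legitimate because the definition quantifies over all of Bob's sequences of moves. (One may also note in passing that $r_{k+1} = \alpha\beta\, r_k \to 0$, so the nested intersection is in fact a single point, but density only uses that it is nonempty and contained in $B(x,\varepsilon)$.)
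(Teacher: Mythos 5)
Your argument is correct, and since the paper itself offers no proof (Proposition~\ref{secgameprop1} is prefaced only by ``It is easy to observe that''), your write-up simply supplies the standard one-line reasoning that the paper takes for granted: Bob moves first, so for any target ball $B(x,\varepsilon)$ Bob can choose $(z_1,r_1)=(x,\varepsilon)$; a winning strategy for Alice then produces a point of $S$ in $\bigcap_k B(z_k,r_k)\subseteq B(x,\varepsilon)$, proving density. You correctly isolate the only subtle point, namely that ``Alice wins against every play of Bob'' applies in particular to plays with a prescribed opening move.

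One minor remark: the parenthetical about $r_{k+1}=\alpha\beta\, r_k\to 0$ (so the nested intersection is a single point) is true in a complete metric space but, as you note, unnecessary for the conclusion; what the argument actually uses is only $\bigcap_k B(z_k,r_k)\subseteq B(z_1,r_1)$.
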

$\alpha$-winning in Schmidt's game is stable under taking countable intersections; specifically, 
\begin{proposition}\label{secgameprop2}
    For all $\alpha\in (0,1)$, the countable intersection of $\alpha$-winning sets is $\alpha$-winning. 
\end{proposition}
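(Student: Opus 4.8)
\emph{The plan} is to follow Schmidt's diagonalisation argument from \cite{schmidtsgame}. Fix $\alpha\in(0,1)$, let $(S_i)_{i\ge1}$ be $\alpha$-winning subsets of $X$, and — since by definition $\alpha$-winning means $(\alpha,\beta)$-winning for every $\beta\in(0,1)$ — fix an arbitrary $\beta\in(0,1)$; it suffices to prove that $S:=\bigcap_{i\ge1}S_i$ is $(\alpha,\beta)$-winning. Consider a play of the $(\alpha,\beta)$-game producing nested closed balls $B(z_1,r_1)\supseteq B(\zeta_1,\rho_1)\supseteq B(z_2,r_2)\supseteq\cdots$ with $\rho_k=\alpha r_k$ and $r_{k+1}=\beta\rho_k$, so that $r_k=(\alpha\beta)^{k-1}r_1\to0$. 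Since $X$ is complete and the balls are closed with radii tending to $0$, $\bigcap_kB(z_k,r_k)$ is a single point $x_\infty$, and the goal is to exhibit a strategy for $\pa$ that forces $x_\infty\in S$ no matter how $\pb$ plays.

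The device is to have $\pa$ reserve, for each index $i$, a set of rounds on which she mimics a winning strategy for $S_i$. Partition $\mathbb N$ into infinitely many infinite \emph{arithmetic progressions} $\mathbb N=\bigsqcup_{i\ge1}N_i$; concretely one may take $N_i=\{\,2^{i-1}(2m-1):m\ge1\,\}$, which has constant gap $d_i:=2^i$. Write $N_i=\{n_{i,1}<n_{i,2}<\cdots\}$. The point is the following bookkeeping fact: retaining only the rounds indexed by $N_i$, the sequence
\[
B(z_{n_{i,1}},r_{n_{i,1}})\supseteq B(\zeta_{n_{i,1}},\rho_{n_{i,1}})\supseteq B(z_{n_{i,2}},r_{n_{i,2}})\supseteq B(\zeta_{n_{i,2}},\rho_{n_{i,2}})\supseteq\cdots
\]
is a legal play of the $(\alpha,\beta_i)$-game with $\beta_i:=\alpha^{d_i-1}\beta^{d_i}\in(0,1)$: the nesting is inherited, $\rho_{n_{i,j}}=\alpha r_{n_{i,j}}$ is immediate, and because $n_{i,j+1}-n_{i,j}=d_i$ one has $r_{n_{i,j+1}}=(\alpha\beta)^{d_i}r_{n_{i,j}}=\alpha^{d_i-1}\beta^{d_i}\rho_{n_{i,j}}=\beta_i\rho_{n_{i,j}}$, exactly $\pb$'s radius constraint in that game.

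Now define $\pa$'s strategy: for each $i$ fix an $(\alpha,\beta_i)$-winning strategy $\sigma_i$ for $S_i$ (available since $S_i$ is $\alpha$-winning), and at each round $n$, letting $i$ be the unique index with $n\in N_i$, have $\pa$ play according to $\sigma_i$ applied to the $N_i$-sub-play just described. This is well-defined, since the progressions $N_i$ are disjoint and at round $n$ the player $\pa$ has already seen $\pb$'s ball $B(z_n,r_n)$, which is $\pb$'s current move in the $i$-th sub-play. For each $i$, the $i$-th sub-play is then an $(\alpha,\beta_i)$-game in which $\pa$ follows $\sigma_i$, so its unique limit point lies in $S_i$; but that limit point is the common point of a subsequence of the nested balls $B(z_k,r_k)$, hence equals $x_\infty$. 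Thus $x_\infty\in S_i$ for all $i$, i.e.\ $x_\infty\in S$. As this holds against every play of $\pb$, the set $S$ is $(\alpha,\beta)$-winning, and since $\beta$ was arbitrary, $S$ is $\alpha$-winning. \textbf{The one delicate point}, and the reason the reserved round-sets must be arithmetic progressions rather than arbitrary infinite subsets of $\mathbb N$, is that each sub-play has to be an honest Schmidt game with a \emph{fixed} parameter $\beta_i$; with variable gaps $\pb$'s induced contraction ratio would change from round to round and no longer fit the definition. Everything else is routine.
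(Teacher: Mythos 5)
Your proof is correct. Since the paper does not include its own proof of Proposition \ref{secgameprop2} and simply cites Schmidt \cite[Theorem 2]{schmidtsgame}, the only sensible comparison is with Schmidt's diagonalisation, and your argument is a clean and complete rendering of exactly that idea. The bookkeeping checks out: with the paper's convention that $\rho_k=\alpha r_k$ and $r_{k+1}=\beta\rho_k$ hold as \emph{equalities}, the partition of $\mathbb N$ into arithmetic progressions $N_i$ of common difference $d_i=2^i$ makes each sub-play an honest $(\alpha,\beta_i)$-game with $\beta_i=\alpha^{d_i-1}\beta^{d_i}\in(0,1)$, and the identification of each sub-play's limit point with $x_\infty$ (a subsequence of nested balls with radii $\to 0$ in a complete space) is exactly right. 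You also correctly flagged the one genuinely delicate issue, namely that the reserved rounds must have constant gap precisely because the paper defines Schmidt's game with a fixed ratio $\beta$ rather than an upper bound; with variable gaps the induced contraction parameter of the sub-play would not be constant and the hypothesis ``$S_i$ is $(\alpha,\beta_i)$-winning'' could not be invoked. No gaps.
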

For a proof of Proposition \ref{secgameprop2}, see e.g., \cite[Theorem 2]{schmidtsgame}. 

Winning properties in Schmidt's game also imply full Hausdorff dimensions in \textsl{Ahlfors regular} measure spaces. For a constant $s>0$, a probability measure $\mu$ of $X$ is called \textsl{Ahlfors $s$-regular} if there is positive constants $\rho_{\max}$ and $K$ such that for all $(z,\rho)\in X\times (0,\rho_{\max})$ 
\begin{equation*}
    \frac{1}{K}\rho^s \leq \mu\left(B(z,\rho)\right)\leq K\rho^s.
\end{equation*}
The space $X$ is called \textsl{Ahlfors $s$-regular} if it is equal to the support of an Ahlfors $s$-regular probability measure. It is very well known (see e.g., \cite[Proposition 5.1]{dkwbschmidtgame}) that 
\begin{proposition}
    Let $s>0$. Suppose $(X,d)$ is a Ahlfors $s$-regular metric space. If $S\subset X$ is winning, then the Hausdorff dimension of $S$ is $s$. 
\end{proposition}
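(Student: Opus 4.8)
The plan is to prove the two bounds $\dim_H S\le s$ and $\dim_H S\ge s$ separately, the winning hypothesis being needed only for the second. For the upper bound one uses the fact that a space supporting an Ahlfors $s$-regular probability measure $\mu$ satisfies $0<\mathcal{H}^s(X)<\infty$: the inequality $\mu\big(B(z,\rho)\big)\le K\rho^s$ forces $\mathcal{H}^s(X)>0$ by the mass distribution principle, while $\mu\big(B(z,\rho)\big)\ge \tfrac1K\rho^s$ gives $\mathcal{H}^s(X)<\infty$ via a Vitali covering argument; hence $\dim_H X=s$, and therefore $\dim_H S\le s$ for any $S\subseteq X$.

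For the lower bound, fix $\alpha\in(0,1)$ for which $S$ is $\alpha$-winning, let $\beta\in(0,\tfrac12)$ be a parameter to be driven to $0$, and let $\sigma$ be a winning strategy for $\pa$ in the $(\alpha,\beta)$-game. The geometric input supplied by Ahlfors regularity is a packing estimate: every ball $B(\zeta,\rho)$ contains a family of at least $c\beta^{-s}$ closed balls of radius $\beta\rho$ whose centers are $10\beta\rho$-separated (so the balls are pairwise disjoint and mutually well separated), where $c=c(K,s)>0$; this is obtained by taking a maximal $10\beta\rho$-separated subset of $B(\zeta,\rho/2)$ and comparing $\mu$-masses. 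Now consider all runs of the game in which $\pb$, at each stage, picks his next ball from such a separated family inside $\pa$'s current ball. These runs form a rooted tree: its depth-$k$ vertices are $\pb$'s $k$-th balls, each of radius $r_1(\alpha\beta)^{k-1}$ with $r_1$ the initial radius, to each of which $\sigma$ assigns a single answering ball, whose children are the $\ge c\beta^{-s}$ separated choices for $\pb$'s next ball. The set $C$ of limits of infinite branches is a Cantor set, and since $\pa$ follows the winning strategy $\sigma$ along every branch, $C\subseteq S$.

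It remains to bound $\dim_H C$ from below. Equip $C$ with the natural probability measure $\nu$ that splits mass uniformly among children at every level, so that each depth-$k$ cylinder has $\nu$-mass $O\big((\beta^s/c)^k\big)$; the separation built into $\pb$'s choices guarantees that any ball $B(x,\rho)$ with $r_{k+1}\le\rho$ meets at most one depth-$k$ cylinder. Substituting $k\gtrsim \log(1/\rho)/\log\!\big(1/(\alpha\beta)\big)$ into this mass bound (valid once $\beta$ is small enough that $\beta^s<c$) yields $\nu\big(B(x,\rho)\big)\le C_0\,\rho^{t}$ with
\begin{equation*}
 t \;=\; \frac{s\log(1/\beta)+\log c}{\log(1/\alpha)+\log(1/\beta)} \;\in\;(0,s),
\end{equation*}
so the mass distribution principle gives $\mathcal{H}^{t}(C)>0$, whence $\dim_H S\ge\dim_H C\ge t$. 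Holding $\alpha$ fixed and letting $\beta\to0^{+}$ one has $t\to s$, so $\dim_H S\ge s$, completing the proof.

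I expect the only genuine work to be bookkeeping: pinning down the packing constant $c$ and the separation factor so that the ``at most one cylinder'' property really does hold at every scale, and propagating the constants through the mass distribution principle. Conceptually nothing is delicate — the winning hypothesis is used only in the single line $C\subseteq S$, and everything else is the standard reduction of a tree with prescribed branching numbers and contraction ratios to a Frostman measure on its boundary.
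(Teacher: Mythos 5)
The paper does not actually prove this proposition --- it is quoted as a well-known fact with a pointer to \cite[Proposition 5.1]{dkwbschmidtgame} --- so there is no in-paper argument to compare against. Your proof is the standard one, going back to Schmidt's original dimension bound and carried over to Ahlfors regular spaces in the references the paper gestures at: restrict $\pb$ to a well-separated packing of sub-balls supplied by Ahlfors regularity, have $\pa$ answer with a fixed $(\alpha,\beta)$-winning strategy, obtain a Cantor tree $C\subset S$ with uniform branching number $\gtrsim c\beta^{-s}$ and contraction ratio $\alpha\beta$, and run the mass distribution principle on the branching measure. The exponent
\begin{equation*}
t=\frac{s\log(1/\beta)+\log c}{\log(1/\alpha)+\log(1/\beta)}
\end{equation*}
is computed correctly and tends to $s$ as $\beta\to 0^{+}$, which together with the trivial upper bound $\dim_H S\le\dim_H X=s$ gives the result. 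The two points you flag as bookkeeping are exactly where care is needed, and both go through: (i) to conclude that $B(x,\rho)$ meets at most one depth-$k$ cylinder you should use the two-sided constraint $r_{k+1}\le\rho<r_k$ (a ball of radius comparable to $r_1$ meets many depth-$k$ cylinders), after which the $8r_k$-gap coming from the $10\beta$-separation does the job; (ii) $\pb$'s candidate balls must lie inside $\pa$'s ball, which is why the separated net is taken in $B(\zeta,\rho/2)$ and uses $\beta\le 1/2$, harmless since only small $\beta$ matter. One more standard ingredient you invoke implicitly in the upper bound is that a \emph{finite} Ahlfors $s$-regular measure makes $\mathcal{H}^s$ restricted to $X$ comparable to $\mu$, hence finite; that is the Vitali step, and it is what gives $\dim_H X\le s$. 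Overall the argument is correct and is, as far as one can tell, the same route as the cited reference.
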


McMullen introduced a variation of an infinite game like Schmidt's game in \cite{mcmullengame}. To define McMullen's game, let $\beta\in (0,1/3)$ be a constant. Again, consider two players $\pa$ and $\pb$. $\pb$ and $\pa$ take turns to pick elements $(z_k,r_k)$ and $(\zeta_k,\rho_k)$ of $X\times (0,\infty)$ respectively, such that 
\begin{equation*}
    B(z_1,r_1)\supset \left[B(\zeta_1,\rho_1)\backslash B(z_1,r_1)\right]\supset B(z_2,r_2) \supset \cdots B(z_k,r_k)\supset  \left[B(\zeta_k,\rho_k)\backslash B(z_k,r_k)\right]\supset B(z_{k+1},r_{k+1})\supset \cdots 
\end{equation*}
and 
\begin{equation*}
    \rho_k\leq \beta r_k\quad\text{and}\quad r_{k+1}\geq \beta r_k. 
\end{equation*}
A subset $S$ of $X$ is called \textsl{$\beta$-absolute winning} given a constant $\beta\in (0,1/3)$ if $\pa$ can always set up her plays, regardless of how $\pb$ plays, so that $\bigcap_{k=1}^\infty B(z_k,r_k)\cap S\neq \varnothing$.
\par

It is very obvious that winning in McMullen's game is stronger than winning in Schmidt's game, as in McMullen's game, one may view $\pa$'s moves as merely blocking one of $\pb$'s possible moves, so $\pa$ has very little control over the states of the game. In particular, 
\begin{proposition}
    If a subset $S$ is $\beta$-absolute winning in McMullen's game for all $\beta\in \left(0,\frac{1}{3}\right)$, then $S$ is $\alpha$-winning in Schmidt's game for all $\alpha\in \left(0,\frac{1}{2}\right)$. 
\end{proposition}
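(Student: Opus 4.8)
The plan is to prove the implication directly: fix an arbitrary Schmidt parameter pair $\alpha \in (0,1/2)$ and $\beta \in (0,1)$, and exhibit a winning strategy for $\pa$ in the $(\alpha,\beta)$-Schmidt game by having her \emph{simulate}, move by move, a winning strategy in a suitable instance of McMullen's absolute game. (I write the argument for $X$ a subinterval of the line, which is all that is needed here; only the geometric step below uses this.) First I would fix an auxiliary parameter $\beta^{*} \in (0,1/3)$ small enough that $\beta^{*} \le \alpha\beta$ and $\beta^{*} < 1 - 2\alpha$ --- possible precisely because $\alpha < 1/2$ forces $1 - 2\alpha > 0$; concretely one may take $\beta^{*} = \tfrac{1}{2}\min\{1/3,\, \alpha\beta,\, 1-2\alpha\}$. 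By hypothesis $S$ is $\beta^{*}$-absolute winning, so fix a strategy $\Sigma$ realizing this for $\pa$ in the absolute game.

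Next I would describe the simulation. When $\pb$ opens the Schmidt game with $B(z_1,r_1)$, treat this ball simultaneously as $\pb$'s opening move in an absolute $\beta^{*}$-game. Inductively, suppose that after $\pb$'s $k$-th Schmidt move $B(z_k,r_k)$ the balls $B(z_1,r_1) \supseteq \cdots \supseteq B(z_k,r_k)$ have been realized as the successive moves of $\pb$ in a legal partial play of the absolute game, with $\pa$'s absolute moves $D_1,\dots,D_{k-1}$ produced by $\Sigma$. Feeding $B(z_k,r_k)$ to $\Sigma$ yields the deletion ball $D_k = B(\xi_k,\sigma_k)$ with $\sigma_k \le \beta^{*} r_k$. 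Now $\pa$ must answer in the Schmidt game with a ball of radius exactly $\rho_k := \alpha r_k$ inside $B(z_k,r_k)$; she chooses one, $B(\zeta_k,\alpha r_k)$, that is moreover disjoint from $D_k$. This is where the geometry enters: removing from the interval $B(z_k,r_k)$ (length $2r_k$) the sub-interval $D_k \cap B(z_k,r_k)$ (length $\le 2\beta^{*} r_k < 2(1-2\alpha)r_k$) leaves a component of length $> 2\alpha r_k$, which accommodates $B(\zeta_k,\alpha r_k)$. Whatever $\pb$ plays next, $B(z_{k+1},r_{k+1}) \subseteq B(\zeta_k,\alpha r_k)$ with $r_{k+1} = \beta\rho_k = \alpha\beta r_k$; since $B(\zeta_k,\alpha r_k) \subseteq B(z_k,r_k)\setminus D_k$ and $r_{k+1} = \alpha\beta r_k \ge \beta^{*} r_k$, this $B(z_{k+1},r_{k+1})$ is a legal next move for $\pb$ in the absolute game, which maintains the induction.

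Finally I would conclude: the simulation produces a legal infinite play of the absolute $\beta^{*}$-game in which $\pa$ has followed the winning strategy $\Sigma$, so $\bigcap_{k\ge 1} B(z_k,r_k) \cap S \ne \varnothing$; but $\bigcap_{k\ge 1} B(z_k,r_k)$ is exactly the nested intersection generated by the Schmidt game, so $\pa$'s simulated strategy wins the $(\alpha,\beta)$-game. Since $\alpha \in (0,1/2)$ and $\beta\in(0,1)$ were arbitrary, $S$ is $\alpha$-winning for every $\alpha \in (0,1/2)$.

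The only substantive point --- the main obstacle, such as it is --- is the geometric inscription step, which forces the threshold $\alpha < 1/2$ (through $\beta^{*} < 1-2\alpha$) and is a one-line interval estimate; everything else is bookkeeping matching the absolute game's lower bound $r_{k+1}\ge\beta^{*}r_k$ against the Schmidt game's forced contraction $r_{k+1} = \alpha\beta r_k$, which is exactly what dictates the choice $\beta^{*}\le\alpha\beta$. The conceptual content is just that $\pa$'s extra power in the Schmidt game --- to \emph{place} her ball rather than merely to delete one --- is precisely what lets her enact whatever deletion $\Sigma$ prescribes.
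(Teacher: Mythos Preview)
The paper does not supply a proof of this proposition; it simply records the fact as well known, with an informal remark that Alice's role in the absolute game is weaker than in Schmidt's game and an implicit pointer to McMullen's original paper. So there is nothing to compare against at the level of argument.

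Your proof is correct and is the standard one. The two calibrations you isolate are exactly right: the condition $\beta^{*}\le \alpha\beta$ guarantees that each Schmidt move of $\pb$ meets the lower-radius constraint $r_{k+1}\ge \beta^{*}r_k$ of the absolute game, and the condition $\beta^{*}<1-2\alpha$ guarantees the one-dimensional inscription step (after deleting an interval of length at most $2\beta^{*}r_k$ from one of length $2r_k$, some remaining component has length exceeding $2\alpha r_k$). The rest is bookkeeping, and your conclusion that the Schmidt and absolute plays share the same nested sequence $\bigl(B(z_k,r_k)\bigr)_k$ is what transfers the win. One cosmetic remark: the paper's displayed chain for McMullen's game has an obvious typo (it writes $B(\zeta_k,\rho_k)\setminus B(z_k,r_k)$ where $B(z_k,r_k)\setminus B(\zeta_k,\rho_k)$ is intended); your reading is the correct one and your argument does not depend on the typo.
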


Therefore, from Theorem \ref{mainthm}, it immediately follows that 
\begin{corollary}
     In Systems I and II, for any equicontinuous sequence of functions $\left(f_n:[0,1]\to [0,1]\right)_n$, the set $\mathfrak{BA}\left(\left(f_n\right)_n\right)$ is $\alpha$-winning (in Schmidt's game) for all $\alpha\in \left(0,\frac{1}{2}\right)$.
\end{corollary}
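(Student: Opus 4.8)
The plan is to read this off directly: the Corollary is an immediate consequence of Theorem~\ref{mainthm} together with the Proposition stated just before it, which says that any set that is $\beta$-absolute winning for every $\beta\in(0,1/3)$ is $\alpha$-winning in Schmidt's game for every $\alpha\in(0,1/2)$. Theorem~\ref{mainthm} furnishes exactly the hypothesis of that implication for $S=\mathfrak{BA}\left(\left(f_n\right)_n\right)$ in both System~I and System~II, so there is nothing left to do beyond quoting the Proposition. In particular the passage ``Theorem~\ref{mainthm} $\Rightarrow$ Corollary'' presents no obstacle whatsoever; all of the difficulty is concentrated in Theorem~\ref{mainthm} itself, whose proof runs through Section~\ref{secproof}.

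I would nonetheless recall, at a high level, why the underlying implication between the two notions of winning holds; this is due to McMullen~\cite{mcmullengame}. Fix $\alpha\in(0,1/2)$ and an arbitrary $\beta\in(0,1)$, so that $\pa$ must exhibit a winning strategy in the $(\alpha,\beta)$-version of Schmidt's game with target $S$. The key slack is that, when $\pb$ plays a ball $B(z_k,r_k)$, a legal move of $\pa$ is any ball of radius $\alpha r_k$ whose center lies within distance $(1-\alpha)r_k$ of $z_k$; since $\alpha<1/2$ we have $(1-\alpha)r_k>\alpha r_k$, so $\pa$ can steer her ball away from a prescribed ``forbidden'' ball of radius comparable to $(1-2\alpha)r_k>0$. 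This is precisely the room needed to emulate a $\beta_0$-absolute winning strategy for $S$ for a suitable $\beta_0\in(0,1/3)$ depending on $\alpha$ (the precise comparison of the radius parameters is carried out in \cite{mcmullengame}): $\pa$ plays the centers dictated by the absolute strategy, the resulting nested sequence of balls is one that strategy handles, and its nonempty intersection meets $S$. Since $\beta$ was arbitrary, $S$ is $\alpha$-winning, and since $\alpha<1/2$ was arbitrary, the Corollary follows.

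Finally, combining the Corollary with the structural results of Section~\ref{secgame} records the promised ``largeness'' of $\mathfrak{BA}\left(\left(f_n\right)_n\right)$: by Proposition~\ref{secgameprop1} it is dense in $[0,1]$; since $[0,1]$ equipped with Lebesgue measure is Ahlfors $1$-regular, it has full Hausdorff dimension; and by Proposition~\ref{secgameprop2} the $\alpha$-winning property is preserved under countable intersections, so one may simultaneously impose countably many equicontinuous twist constraints and still obtain a winning, hence dense and full-dimensional, set.
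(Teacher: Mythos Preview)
Your proposal is correct and matches the paper's own argument exactly: the paper derives this Corollary in one line from Theorem~\ref{mainthm} via the Proposition immediately preceding it (absolute winning for all $\beta\in(0,1/3)$ implies $\alpha$-winning for all $\alpha\in(0,1/2)$). Your added sketch of McMullen's implication and the remarks on density, Hausdorff dimension, and countable intersections are accurate elaborations, but the core deduction is identical to the paper's.
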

Consequently, 
\begin{corollary}
     In Systems I and II, for any equicontinuous sequence of functions $\left(f_n:[0,1]\to [0,1]\right)_n$, the set $\mathfrak{BA}\left(\left(f_n\right)_n\right)$ has local Hausdorff dimension $1$.
\end{corollary}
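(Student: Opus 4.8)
Write $S:=\mathfrak{BA}((f_n)_n)$. The plan is to deduce the statement from the preceding Corollary — which gives that $S$ is $\alpha$-winning in Schmidt's game for every $\alpha\in(0,1/2)$ — together with the Proposition asserting that a winning subset of an Ahlfors $s$-regular space has Hausdorff dimension $s$. That $\dim_H S=1$ then follows at once, since $([0,1],|\cdot\,|)$ is Ahlfors $1$-regular with Lebesgue measure as a witness (in both Systems, irrespective of the invariant measure). The content of the Corollary is the \emph{local} refinement, i.e.\ $\dim_H(S\cap U)=1$ for every nonempty open $U\subseteq[0,1]$ (equivalently, using that $S$ is dense by Proposition \ref{secgameprop1}, $\lim_{r\to0^+}\dim_H(S\cap B(x,r))=1$ for every $x\in[0,1]$), so the only real task is to localize the winning property.

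First I would record the elementary geometry. Every closed subinterval $B_0=[a,b]\subseteq[0,1]$, equipped with the renormalized restriction of Lebesgue measure, is again an Ahlfors $1$-regular metric space; and the affine bijection $\phi\colon[0,1]\to B_0$, $\phi(t)=a+(b-a)t$, is a similarity. Hence $\phi$ preserves Hausdorff dimension, carries closed balls of $[0,1]$ onto closed balls of $B_0$ and back (scaling every radius by $b-a$ and preserving all radius ratios), and therefore transports Schmidt's game on $[0,1]$ with parameters $(\alpha,\beta)$ isomorphically onto Schmidt's game on $B_0$ with the same parameters.

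The key step is the localization lemma: if $S$ is $\alpha$-winning in $[0,1]$, then for every closed ball $B_0\subseteq[0,1]$ the set $\phi^{-1}(S):=\{t\in[0,1]:\phi(t)\in S\}$ is again $\alpha$-winning in $[0,1]$. To prove it, fix $\beta\in(0,1)$ and have $\pa$ play for $\phi^{-1}(S)$ by simulating a ``virtual'' play on $[0,1]$: she pushes each of $\pb$'s balls forward by $\phi$, feeds the image to her winning strategy for $S$, and pulls the strategy's response back by $\phi^{-1}$. Because $\phi$ is a similarity, all nesting inclusions and all radius constraints are automatically satisfied on both sides, so the virtual play is a legal play of the $(\alpha,\beta)$-game on $[0,1]$ against $\pa$'s winning strategy for $S$; its nested intersection therefore meets $S$ at some point $y$, which lies in $B_0$ because it lies inside the $\phi$-image of $\pb$'s opening ball. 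Then $\phi^{-1}(y)$ lies in the nested intersection of the real play and, by construction, in $\phi^{-1}(S)$, so $\pa$ wins. Hence $\phi^{-1}(S)$ is $(\alpha,\beta)$-winning for every $\beta$, as claimed.

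Finally, combine: given a nonempty open $U$, choose a nonempty closed ball $B_0\subseteq U$ and the corresponding $\phi$. By the preceding Corollary $S$ is (say) $\tfrac14$-winning in $[0,1]$, so the lemma makes $\phi^{-1}(S)$ a winning subset of the Ahlfors $1$-regular space $[0,1]$, whence the Proposition gives $\dim_H\phi^{-1}(S)=1$; applying the similarity $\phi$ yields $\dim_H(S\cap B_0)=\dim_H\phi^{-1}(S)=1$, so $\dim_H(S\cap U)=1$. As $U$ was arbitrary, $S$ has local Hausdorff dimension $1$. The main (and essentially only) obstacle is the localization lemma, which reduces entirely to the observation that Schmidt's game is invariant under similarities; alternatively one could simply note that the proof of the cited Proposition already produces the local statement. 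No feature of Systems I or II is used beyond the previous Corollary.
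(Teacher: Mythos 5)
Your argument is correct and follows the route the paper itself intends: the paper deduces this corollary with nothing more than the word ``Consequently'' after the $\alpha$-winning corollary and the proposition on Ahlfors $s$-regular spaces, and your localization lemma (transporting the game via the affine similarity $\phi$) is exactly the standard way to upgrade the global dimension statement of that proposition to the local one claimed here.

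One small technical wrinkle worth flagging: when $B(z,r)$ is a ball of the metric space $[0,1]$ that is clipped at $0$ or $1$, its image $\phi(B(z,r))$ is a ball of the metric space $B_0$ but need not be a ball of $[0,1]$ (it may start at $a>0$ or end at $b<1$ without being clipped by $\{0,1\}$), so the ``virtual play'' on $[0,1]$ is not literally legal in the first few rounds. This is a routine nuisance — after finitely many rounds the radii shrink below $\min(a,1-b)$ and everything is a genuine interval in the interior, or one can simply restrict $\pb$'s opening move — and your closing remark that the proof of the cited proposition already produces the local statement (by running the Cantor-set construction inside $\pb$'s opening ball $B_0$) sidesteps it entirely. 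Either way the conclusion stands, and no feature of Systems I or II beyond the preceding corollary is needed, as you say.
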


\section{Proof of Theorem \ref{mainthm}}\label{secproof}

\subsection{Setup and inductive hypothesis}\label{secproof1}

Let $\beta\in \left(0,\frac{1}{3}\right]$. We will supply     $\pa$ with a winning strategy for each system.

First, we define the following terms: 
\begin{itemize}
    \item A point $z\in [0,1]$ is called a $n$th-order \textbf{vertex} (also denoted by vertex$^{(n)}$) if $z\in T^{-n}(0)$.
    \item An interval is called an $n$th-order \textbf{cylinder} (also denoted by cylinder$\level{n}$) if its endpoints are two consecutive (ordered by the natural order of real numbers) vertices$\level{n}$. 
    \begin{remark}
        Note that 
        \begin{itemize}
            \item $T^n$ is injective on any cylinder$\level{n}$, and
            \item given a subset of $[0,1]$, one can count how many cylinders$\level{n}$ it intersects by counting how many vertices$\level{n}$ are contained in it; for example, if a set does not contain any vertex$\level{n}$, then it is included in a cylinder$\level{n}$. 
        \end{itemize}
    \end{remark}
    \item For all $n\geq 1$, a point $z\in [0,1]$ is called a $n$th-order \textbf{*-vertex} (denoted by *-vertex$^{(n)}$) if $z\in T^{-(n-1)}\left(f_{n-1}\left(c^{(n-1)}\right)\right)$ where $c^{(n-1)}$ is the center of $B_1\level{n-1}$. 
    \item Denote the collection of the *-vertices of order less  than or equal to $n$ by $\mathcal{S}\level{n}$.
    %\item A level $n$ is called \textbf{playable} if one of the following holds: 
  %  \begin{itemize}
    %    \item The interior $B_0^{(n)}$ does not contains any vertices$^{(n-1)}$, $L^{(n)}>0$ and $B_0^{(n)}$ contains at least one vertex$^{(n)}$,
  %      \item The interior $B_0^{(n)}$ contains one vertex$^{(n-1)}$ $v$, and denote the bigger (in terms of interval length) of the two connected components of $B_0^{(n)}\backslash \{v\}$ by $I^{(n)}$, $L^{(n)}>0$ and $I^{(n)}$ contains at least one vertex$^{(n)}$.
  %  \end{itemize}
  %  If $(n)$ is not playable, then it is called \textbf{non-playable}. 
  %\item  Define $\theta$ to be the constant satisfying 
   % \begin{equation}\label{iitheta}
     %   -\log_\psi\frac{\left(\alpha\beta\right)^{\log_2\left(4+\theta\right)+2}}{16}=\theta,
    %\end{equation}
    %where $\psi=\frac{1+\sqrt{5}}{2}$. 

\item Denote the diameter of $B_k\level{n}$ by $\tau_k\level{n}$, denote the radius of $B_k\level{n}$ by $r_k\level{n}$, and denote the diameter of $T^{n-1}B_k\level{n}$ by $\rho_k\level{n}$. 
\end{itemize}

Denote the balls chosen by $\pb$ by $\underbrace{B_1^{(1)}}_{\subset [0,1]}\to  \underbrace{B_2^{(1)}}_{\subset B_1^{(1)}\backslash A_1^{(1)}}\to B_3^{(1)}\to \cdots $ and those chosen by $\pa$ by $\underbrace{A_1^{(1)}}_{\subset B_1^{(1)}}\to A_2^{(1)}\to \cdots $. We will allow \textit{zero and negative indices} for these balls; for example, $B_{0}\level{n}$ is the ball one round before $B_{1}\level{n}$ and $B_{-5}\level{n}$ is the ball six rounds before $B_{1}\level{n}$.

In both proofs, we will verify that the following hypotheses hold for all $n\in \mathbb{N}$.
Let $\rho>0$ and $\lambda>0$ be constants that will be specified in the strategies. 

\begin{hypothesis}\label{s1}
\begin{itemize}
    \item In System I, a natural number $n$ (which will be later referred to as the ``$n$th level'') can be categorized into one of the three types: 
\begin{itemize}
    \item \textbf{Type I:} Level $n$ is said to be of \textbf{Type I} if $B_1\level{n}$ does not contain any vertex$\level{n}$.
    \item \textbf{Type II:} Level $n$ is said to be of \textbf{Type II} if the interior of $B_1\level{n}$ does not contain any vertex$\level{n-1}$, $T^{n-1}B_1\level{n}$ contains exactly one vertex$\level{1}$ and $\diam\left(T^{n-1}B_1\level{n}\right)< \frac{\rho}{\gamma}$.
    \item \textbf{Type III:} Level $n$ is said to be of \textbf{Type III} if The interior of $B_1\level{n}$ does not contain any vertex$\level{n-1}$,  $T^{n-1}B_1\level{n}$ contains at least one vertex$\level{1}$ and $\diam\left(T^{n-1}B_1\level{n}\right)\geq \frac{\rho}{\gamma}$.
%    \item \textbf{Type IV:} Level$\level{n}$ is not Type I, II or III. (We will show that under this strategy, levels of Type IV will not occur.)
\end{itemize}

    \item In System II, level $n$ belongs to one of the three types: 
    \begin{itemize}
    \item \textbf{Type I:} Level $n$ is said to be of \textbf{Type I} if $B_1\level{n}$ does not contain any vertex$\level{n}$.
    \item \textbf{Type II:} Level $n$ is said to be of \textbf{Type II} if the interior of $B_1\level{n}$ does not contain any vertex$\level{n-1}$,  $T^{n-1}B_1\level{n}$ contains exactly one vertex$\level{1}$, and $\diam(T^{n-1}B_1\level{n})<\frac{\beta^2}{4(N\level{n})^2}$ where $\frac{1}{N\level{n}}$ is the vertex$\level{1}$ in $T^{n-1}B_1\level{n}$.  
    \item \textbf{Type III:} Level $n$ is said to be of \textbf{Type III} if the interior of $B_1\level{n}$ does not contain any vertex$\level{n-1}$,  $T^{n-1}B_1\level{n}$ contains at least one vertex$\level{1}$, and $\diam(T^{n-1}B_1\level{n})\geq \frac{\beta^2}{4(N\level{n})^2}$ where $\frac{1}{N\level{n}}$ is the rightmost vertex$\level{1}$ in $T^{n-1}B_1\level{n}$.  
%    \item \textbf{Type IV:} Level$\level{n}$ is not Type I, II or III. (We will show that under this strategy, levels of Type IV will not occur.)
\end{itemize}
\end{itemize}
\end{hypothesis}

Denote by $J\level{n}$ the last appearance of a Type-III level before level $n$; i.e., $$J\level{n}=\begin{cases}
        \max \left\{k:\, k<n\text{ and }\text{level}\level{k}\text{ is Type-III}\right\} & \text{if }\left\{k:\, k<n\text{ and }\text{level}\level{k}\text{ is Type-III}\right\}\neq \varnothing,\\
        0& \text{otherwise}. 
    \end{cases}$$

\begin{hypothesis}\label{ih.2}
   $\rho_1\level{n}\geq \begin{cases}
       \rho & \text{if there is a Type-II level after the last Type-III level and before Level }n; \\
       \beta\rho & \text{otherwise}. 
   \end{cases}$
\end{hypothesis}

\begin{hypothesis}\label{ih.3}
     For all Type-III levels $n,m$ and $m-n\geq \lambda$, there must exist some $n<k<m$ in such that level $k$ is of Type III.
\end{hypothesis}

\begin{lemma}\label{indlemma}
     Statements \ref{s1}, \ref{ih.2} and \ref{ih.3} hold for all natural numbers $m,n$. 
\end{lemma}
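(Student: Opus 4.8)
The plan is to prove Lemma~\ref{indlemma} by a simultaneous induction on the level $n$, where at each level one reads off how the three Statements interact with the strategy that $\pa$ is instructed to follow. Since the strategy itself is only described later (Sections~\ref{secproof2} and~\ref{secproof3}), the structure of the argument must be: assume Statements~\ref{s1},~\ref{ih.2},~\ref{ih.3} hold for all levels $\le n$, examine what $\pa$ does at level $n$ depending on the \emph{type} of level $n$ (Type~I, II, or III) and the position of the last Type-III level $J\level{n}$, and deduce that level $n+1$ again falls into one of the three types with the quantitative bounds demanded by Statement~\ref{s1}, that $\rho_1\level{n+1}$ satisfies the dichotomy of Statement~\ref{ih.2}, and that the gap condition of Statement~\ref{ih.3} is preserved. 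The base case $n=1$ should be immediate: $B_1\level{1}$ is the initial ball chosen by $\pb$, and by shrinking/rescaling (or by $\pa$'s first move) one arranges it to be, say, Type~I or to have $\rho_1\level{1}$ as large as the hypothesis requires; with no prior Type-III level, the conditions in Statements~\ref{ih.2} and~\ref{ih.3} are vacuous or trivially met.

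The core of the induction is a case analysis on the type of level $n$. First I would treat the passage from a Type-I level: here $B_1\level{n}$ lies inside a single cylinder$\level{n}$, so $T^{n-1}$ maps it diffeomorphically, and $\pb$'s next ball $B_1\level{n+1}\subset B_2\level{n}\subset B_1\level{n}\setminus A_1\level{n}$ either still avoids all vertices$\level{n+1}$ (stays Type~I) or, because a vertex$\level{n}$ or $\level{n+1}$ has entered, one computes $\diam(T^{n}B_1\level{n+1})$ using the expansion factor of $T$ ($\gamma$ in System~I, or $1/x^2\sim N^2$ in System~II) and checks whether it falls below or above the relevant threshold ($\rho/\gamma$, resp.\ $\beta^2/(4N^2)$), which decides Type~II versus Type~III. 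The key mechanism is that $\pa$'s move $A_1\level{n}$ is chosen (using $\beta<1/3$) precisely to delete the cylinder neighborhood of the offending *-vertex$\level{n}$, guaranteeing the defining $\liminf>0$ property while not forcing the diameter to collapse too fast; this is where Statement~\ref{ih.2}'s lower bound $\rho_1\level{n}\ge\beta\rho$ (or $\rho$) gets used to ensure $\pa$ has enough room. The transition out of Type~II levels is similar but one tracks that a Type-II level does not itself reset the "clock", so the $\rho$ versus $\beta\rho$ alternative in Statement~\ref{ih.2} toggles correctly; the transition out of Type~III levels resets $J\level{n+1}=n$, re-establishes $\rho_1\level{n+1}\ge\beta\rho$, and is what makes Statement~\ref{ih.3}'s gap bound $\lambda$ finite.

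For Statement~\ref{ih.3} specifically, I would argue that between two consecutive Type-III levels the diameters $\rho_1\level{k}$ can shrink by at most a controlled factor per round (bounded below in terms of $\beta$ by the rule $r_{k+1}\ge\beta r_k$ together with $\pa$'s bounded shrinking), while a Type-III level is \emph{forced} to occur once the accumulated image-diameter would otherwise exceed the geometry of $[0,1]$; quantifying this gives an explicit $\lambda=\lambda(\beta,\gamma,\rho)$ (resp.\ involving $\sup$ of the relevant $N\level{n}$, which is where equicontinuity of $(f_n)_n$ enters — it bounds how far $f_n(c\level{n-1})$ can wander and hence bounds the $N\level{n}$ that can appear). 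I expect the main obstacle to be the bookkeeping in System~II: the expansion constant of the Gauss map is unbounded, so the thresholds in Statement~\ref{s1} are $N\level{n}$-dependent, and one must show that $\pa$'s deletions keep $N\level{n}$ under control uniformly — this is exactly the point at which the hypothesis that $(f_n)_n$ is equicontinuous (rather than an arbitrary sequence) is indispensable, and verifying that the quantitative estimates close up without circularity between the three Statements is the delicate part. The rest is routine interval arithmetic with the maps $x\mapsto\gamma x$ and $x\mapsto 1/x$.
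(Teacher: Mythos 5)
Your high-level framework matches the paper's: simultaneous induction over levels, case-split on the Type of level $n$, use of the expansion of $T$ to force a Type-III level within $\lambda$ rounds, and tracking $\rho_1\level{n}$ to close Statement~\ref{ih.2}. However, there are two concrete problems with the proposal.

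The first is a genuine conceptual error about where equicontinuity is used. You claim that the hypothesis that $(f_n)_n$ is equicontinuous is ``indispensable'' in the proof of Lemma~\ref{indlemma}, specifically because it ``bounds how far $f_n(c\level{n-1})$ can wander and hence bounds the $N\level{n}$ that can appear.'' This is not what the paper does, and such an argument would not close: the proof of Lemma~\ref{indlemma} makes no use of equicontinuity, and there is no uniform bound on $N\level{n}$. Instead, the $N\level{n}$-dependence is \emph{cancelled}, not controlled: in System~II the Type-III threshold $\frac{\beta^2}{4(N\level{n})^2}$ and the diameter of the ambient cylinder$^{(1)}$ (which is $\frac{1}{N\level{n}(N\level{n}-1)}$) are both comparable to $(N\level{n})^{-2}$, so the lower bound on $\rho_1\level{n+1}$ in \eqref{IIradiusproj1} and \eqref{IIradiusproj2} is uniform in $N\level{n}$ after the ratio is taken and bounded distortion (Lemma~\ref{bdii}) is applied. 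The $f_n$'s enter the induction only through the $*$-vertices $T^{-(k-1)}\bigl(f_{k-1}(c\level{k-1})\bigr)$, of which $\pa$ needs to delete at most $\lambda$ from any $B_1\level{n}$; this is a purely combinatorial consequence of Statement~\ref{ih.3}, not an analytic consequence of equicontinuity. Equicontinuity is invoked only in Section~\ref{finishproof}, where — having established Proposition~\ref{Iresultprop}, i.e.\ $\dist\bigl(T^n w, f_n(c\level{n})\bigr)\geq\delta$ — one uses $c\level{n}\to w$ together with equicontinuity to pass to $\dist\bigl(T^n w, f_n(w)\bigr)\geq\delta/2$.

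The second gap is that you omit the key Lemma~\ref{lemmanoconsec2}: between any two Type-II levels there must be an intermediate Type-III level. You gesture at the issue when you say ``a Type-II level does not itself reset the clock, so the $\rho$ versus $\beta\rho$ alternative toggles correctly,'' but the two-valued dichotomy in Statement~\ref{ih.2} is only consistent because \emph{at most one} Type-II level occurs between Level $J\level{M}$ and Level $M$. Without that lemma, a run of several Type-II levels would each shrink $\rho_1\level{n}$ by another factor of $\beta$, and the claimed lower bound $\beta\rho$ would fail. The paper proves Lemma~\ref{lemmanoconsec2} by contradiction: two consecutive Type-II levels $n<m$ with no Type-III in between would force $B_1\level{n}$ to contain both a vertex$\level{n}$ and a distinct vertex$\level{m}$, and the resulting diameter lower bound on $B_1\level{m}$ contradicts the Type-II condition at level $m$. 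Any completed proof of Lemma~\ref{indlemma} needs this ingredient or something equivalent.
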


To prove Statement \ref{ih.3} holds for all $m,n$, we will need the following lemma: 
\begin{lemma}\label{lemmanoconsec2}
        For all natural numbers $n,m$ with $1\leq n<m$, if levels $n,m$ belong to Type II, then there exists some natural number $k$ in $(n,m)$ such that level $k$ is of Type III.
    \end{lemma}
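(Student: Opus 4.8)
The plan is to argue by contradiction: suppose $n<m$ are both Type-II levels, and suppose there is no Type-III level strictly between them. I first want to understand what happens to the ball $B_1\level{k}$ as $k$ runs from $n+1$ up to $m$. Since level $n$ is Type II, by Statement \ref{s1} the image $T^{n-1}B_1\level{n}$ contains exactly one vertex$\level{1}$ and has diameter controlled by $\frac{\rho}{\gamma}$ (System I) or $\frac{\beta^2}{4(N\level{n})^2}$ (System II); in particular the diameter $\rho_1\level{n}$ is \emph{small}. The key structural fact I will extract from the strategy (to be set up in Section \ref{secproof2}/\ref{secproof3}) is that once a level is Type II and no Type-III level intervenes, the subsequent levels up to $m$ must all be Type I — because a new Type-II or Type-III level can only be "triggered" when the rescaled image of the current ball grows back up past the relevant threshold, and that growth is exactly what produces a Type-III level. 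So the picture between $n$ and $m$ is: Type II at $n$, then a (possibly empty) run of Type-I levels, then Type II again at $m$.

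Next I would track the diameters across this Type-I run. On a Type-I level $k$, the ball $B_1\level{k}$ sits inside a single cylinder$\level{k}$, so $T^{k-1}$ is injective and affine-like on it, and the diameter $\rho_1\level{k}=\diam\left(T^{k-1}B_1\level{k}\right)$ evolves by the local expansion factor of $T$ (which is $\gamma$ in System I, and of size roughly $(N\level{k})^2$ near the vertex $1/N\level{k}$ in System II) together with the factor $\beta$ coming from the rule $r_{k+1}\geq \beta r_k$ and $\pa$'s blocking move. The point is that Statement \ref{ih.2} pins $\rho_1\level{k}$ between $\beta\rho$ and $\rho$ (resp. the System-II analogue) precisely while a Type-II level is the most recent "event", so the diameters cannot both stay in this controlled window at level $n$ and at level $m$ \emph{and} be consistent with the multiplicative dynamics over the intervening Type-I steps — unless the window was re-entered, which is the Type-III event we assumed does not occur. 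Concretely: for level $m$ to be Type II, its predecessor data $T^{m-2}B_1\level{m-1}$ must have just reached a configuration containing a vertex$\level{1}$ with small diameter, but reaching a vertex$\level{1}$ in the rescaled image after it had shrunk (at level $n$) requires the diameter to have grown back to order $1$ somewhere in between, and the first level at which $T^{k-1}B_1\level{k}$ contains a vertex$\level{1}$ with diameter $\geq$ the threshold is by definition Type III — contradiction.

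Then I would conclude: the assumed absence of a Type-III level in $(n,m)$ forces an impossible diameter bookkeeping, so some $k\in(n,m)$ is Type III, proving the lemma. The main obstacle I anticipate is making the "diameters must grow back to order $1$ to see another vertex$\level{1}$" step rigorous and uniform — in System I this is clean because the expansion is constant ($\gamma$) and the threshold $\frac{\rho}{\gamma}$ is fixed, but in System II the expansion factor and the threshold $\frac{\beta^2}{4(N\level{k})^2}$ both depend on which vertex $1/N\level{k}$ is relevant, so I will need the equicontinuity of $(f_n)_n$ (or at least the earlier-established control on how $N\level{k}$ can vary) to prevent a degenerate scenario where the threshold shrinks as fast as the ball does. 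I would handle that by invoking the bounds on $\rho_1\level{\cdot}$ from Statement \ref{ih.2} together with the explicit description of $\pa$'s strategy to show the relevant ratios stay bounded away from the bad regime over any window of length $<\lambda$, and only a genuine Type-III level can reset the count. This also dovetails with Statement \ref{ih.3}, which is the quantitative companion of this qualitative statement.
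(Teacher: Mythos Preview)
Your setup is right: if no Type-III level sits strictly between $n$ and $m$, then levels $n{+}1,\dots,m{-}1$ are all Type I, so $B_1\level{n+1}=\cdots=B_1\level{m}=B_2\level{n}$. But from there your argument never closes. The assertion that ``reaching a vertex$\level{1}$ \dots\ requires the diameter to have grown back to order $1$'' is precisely the content of the lemma, and you have not justified it; an interval can perfectly well meet a vertex$\level{1}$ while remaining arbitrarily small, so something specific to the configuration \emph{after} Alice has deleted the vertex $v\level{n}$ must be used. Your appeal to the equicontinuity of $(f_n)_n$ is also a red herring: this lemma concerns only the dynamics of $T$ and the cylinder structure, and the functions $f_n$ play no role whatsoever.

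The paper's argument is much more direct and avoids any diameter bookkeeping across the Type-I run. In System~I one simply observes that the vertex$\level{n}$ $v_1\in B_1\level{n}$ and the vertex$\level{m}$ $v_2\in B_1\level{m}\subset B_1\level{n}$ are distinct (Alice removed $v_1$, so $v_1\notin B_1\level{m}$) and both lie in $\gamma^{-m}\mathbb{Z}$; hence $\diam(B_1\level{n})\geq |v_1-v_2|\geq \gamma^{-m}$, which gives
\[
\diam\bigl(T^{m-1}B_1\level{m}\bigr)\;\geq\;\gamma^{m-1}\cdot\beta\,\diam\bigl(B_1\level{n}\bigr)\;\geq\;\beta/\gamma\;\geq\;\rho/\gamma,
\]
contradicting Type~II at level $m$. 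In System~II the paper instead does a two-case split according to which side of the removed vertex $1/N_1=T^{n-1}v_1$ the ball $T^{n-1}B_2\level{n}$ lies on; in either case one obtains directly $\diam(T^{m-1}B_1\level{m})\geq \tfrac{\beta}{4}\cdot\tfrac{1}{N_2}$ (where $1/N_2$ is the vertex$\level{1}$ appearing at level $m$), and this already exceeds the Type-II threshold $\beta^2/(4N_2^{\,2})$. No invocation of Statement~\ref{ih.2}, no tracking of $N\level{k}$ along the run, and no equicontinuity are needed.
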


We claim that under the given strategy, the following property holds: 
\begin{proposition}\label{Iresultprop}
    There exists some $\delta>0$ such that for all $n>1$ and for all $x\in B_{1}\level{n+1}$, one has that $$\dist\left(T^{n} x,f_n\left(c\level{n}\right)\right)\geq \delta.$$
\end{proposition}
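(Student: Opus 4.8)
The plan is to show that $\pa$'s strategy is designed precisely so that whenever $\pa$ makes a move at a level whose successor begins a fresh ball $B_1^{(n+1)}$, she deletes a neighborhood of the relevant $*$-vertex, and that this deletion leaves a definite gap of size $\gtrsim \beta\rho$ (resp.\ $\gtrsim \beta^3/(N^{(n)})^2$ in System II) around $f_n(c^{(n)})$ after the map $T^n$ is applied. Concretely, note that $f_n(c^{(n)})$ is by definition the point such that the $*$-vertices$^{(n+1)}$ are exactly the preimages $T^{-n}(f_n(c^{(n)}))$ intersected with the current cylinder, so controlling $\dist(T^n x, f_n(c^{(n)}))$ for $x\in B_1^{(n+1)}$ is the same as controlling the distance from $B_1^{(n+1)}$ to the nearest $*$-vertex$^{(n+1)}$, magnified by the expansion factor of $T^n$ on that cylinder. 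First I would record, from Statement \ref{ih.2}, that $\rho_1^{(n)} = \diam(T^{n-1}B_1^{(n)}) \geq \beta\rho$ always, so the ball $B_1^{(n)}$ on which we are about to act has image under $T^{n-1}$ of diameter bounded below; this is what prevents the magnification factor from degenerating.

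The key steps, in order, are: (1) unwind the definitions to reduce the claim to a lower bound on $\dist(B_1^{(n+1)}, \mathcal{S}^{(n+1)})$ after applying $T^n$; (2) use the fact that $B_1^{(n+1)} \subset B_2^{(n)} \subset B_1^{(n)}\setminus A_1^{(n)}$ together with the explicit description of the ball $A_1^{(n)}$ that $\pa$ removes in her strategy — $A_1^{(n)}$ is a ball of radius comparable to $\beta\, r_1^{(n)}$ centered so as to contain the $*$-vertex$^{(n+1)}$ nearest to $c^{(n)}$ (this is the content of the strategy constructed in Sections \ref{secproof2}–\ref{secproof3}); (3) conclude that every $x\in B_1^{(n+1)}$ is at distance $\gtrsim \beta\, r_1^{(n)}$ from that $*$-vertex, hence $T^{n-1}x$ is at distance $\gtrsim \beta\rho_1^{(n)} \geq \beta^2\rho$ from $f_n(c^{(n)})$'s preimage under the last iterate — wait, more carefully, from $T^{-1}(f_n(c^{(n)}))\cap T^{n-1}B_1^{(n)}$; (4) apply the last iterate $T$ (which is $\gamma$-Lipschitz in System I, and Lipschitz with controlled constant on the relevant cylinder$^{(1)}$ in System II, since $|T'(y)| = 1/y^2$ and $y$ is bounded away from $0$ on that cylinder by the Type-classification bounds in Statement \ref{s1}) to transfer this to a lower bound $\dist(T^n x, f_n(c^{(n)})) \geq \delta$ with $\delta$ depending only on $\beta,\rho,\gamma$ (resp.\ $\beta,\rho$ and the uniform bound on $N^{(n)}$ coming from the Type-II/III bounds). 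Finally I would take $\delta$ to be the minimum of the two system-specific constants.

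The main obstacle I anticipate is step (4) in System II: the last iterate of $T$ is not Lipschitz with a universal constant — its derivative blows up near $0$ — so one must exploit the Type classification in Statement \ref{s1} to guarantee that the vertex$^{(1)}$ in play, namely $\tfrac{1}{N^{(n)}}$, stays bounded away from $0$, equivalently that $N^{(n)}$ is bounded above. That boundedness is exactly where the Type-II diameter bound $\diam(T^{n-1}B_1^{(n)}) < \tfrac{\beta^2}{4(N^{(n)})^2}$ and the requirement that $\rho_1^{(n)}\geq\beta\rho$ must be combined: together they force $\tfrac{\beta^2}{4(N^{(n)})^2} > \beta\rho$, i.e.\ $N^{(n)} < \tfrac{\beta}{2\sqrt{\beta\rho}}$, giving the uniform upper bound on $N^{(n)}$ and hence the uniform lower bound $y \geq \tfrac{1}{N^{(n)}} \gtrsim \sqrt{\beta\rho}/\beta$ needed to bound $|T'|$ on the relevant cylinder. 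A secondary subtlety is making sure the argument is uniform over $n$: the constants $\rho$ and $\lambda$ are fixed once and for all in the strategy, so $\delta$ built from them is genuinely independent of $n$, which is what the statement requires. I would also remark that the case distinction on the Type of level $n$ is harmless, since in every case Statement \ref{ih.2} delivers $\rho_1^{(n)}\geq\beta\rho$, which is all that steps (3)–(4) use.
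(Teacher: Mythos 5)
Your proposal contains a genuine misreading of the strategy, and because of it the central step~(2) is false as written. You assert that at each level $n$, $\pa$'s first move $A_1^{(n)}$ is a ball ``centered so as to contain the $*$-vertex$^{(n+1)}$ nearest to $c^{(n)}$,'' and you then derive the gap from the single containment $B_1^{(n+1)}\subset B_2^{(n)}\subset B_1^{(n)}\setminus A_1^{(n)}$. That is not what the strategy does. At Type-I levels $\pa$ makes no move at all (she only relabels $B_1^{(n)}$ as $B_1^{(n+1)}$, so $B_1^{(n+1)}=B_1^{(n)}$ and no gap is created). At Type-II levels $\pa$ removes a ball centered at the \emph{vertex}$^{(n)}$, not at any $*$-vertex; again no gap around $T^{-n}(f_n(c^{(n)}))$ is created at that level. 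It is only at Type-III levels that $\pa$ addresses the $*$-vertices, and there she must clear \emph{all} of $\mathcal{S}^{(n-1)}\cap B_1^{(n)}$ — every $*$-vertex of order at most $n-1$ that has accumulated since the previous Type-III level — using up to $\lceil\log_2\lambda\rceil$ rounds, plus a few more to handle the one vertex$^{(n)}$ and the at most two $*$-vertices$^{(n)}$ that appear afterwards. Thus the relabeling $B_1^{(n+1)}=B_{H^{(n)}+2}^{(n)}$ (System I) or $B_{H^{(n)}+3}^{(n)}$ (System II, Subcase~2) happens many rounds after $B_1^{(n)}$, not one round after, and the gap bound in \eqref{Iresult!}, \eqref{IIresult!1}, \eqref{IIresult!} is stated for the entire range $J^{(n)}<k\leq n$ — it covers retroactively the non-Type-III levels that did not get their own protective move.

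This misreading is not cosmetic: it hides the two pieces of bookkeeping the paper's argument actually depends on, neither of which appears in your outline. First, one must know that $|\mathcal{S}^{(n-1)}\cap B_1^{(n)}|\leq\lambda$, so that $\pa$ can actually clear the accumulated $*$-vertices in $O(\log\lambda)$ rounds; this is where Statement~\ref{ih.3} (no stretch of more than $\lambda$ levels without a Type-III level) enters, and it is proved by a separate induction that you do not invoke. Second, one needs the radius lower bound $\rho_1^{(n)}\geq\beta\rho$ (Statement~\ref{ih.2}) \emph{at the Type-III level}, which is maintained through the ``relabel and shrink'' moves by a delicate sequence of inequalities \eqref{Iradiusproj}, \eqref{IIradiusproj1}, \eqref{IIradiusproj2} involving the chosen constants $\rho$, $\lambda$ and, in System~II, the bounded distortion and expansion lemmas. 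You cite Statement~\ref{ih.2} but treat it as a given fact, without noticing that its proof is entangled with the very strategy you are trying to describe. Your step~(4) observation about bounding $N^{(n)}$ and hence $|T'|$ in System~II is a reasonable instinct (the paper obtains a similar bound from \eqref{r1nest} and Statement~\ref{ih.2}, giving $N^{(n)}<1/(\beta\rho)$), but it cannot rescue the argument because the earlier steps rest on a strategy that is not the one in the paper. To repair the proposal you would need to replace step~(2) with the Type-III accumulation argument, and bring Statement~\ref{ih.3} and the inductive bound on $|\mathcal{S}^{(n-1)}\cap B_1^{(n)}|$ into the proof explicitly.
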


\subsection{Construction of strategy, I}\label{secproof2}

Let $\rho=\beta\cdot \min \left\{\rho_1\level{1}, \rho_\#\right\}$, where  $\rho_\#$ is a positive solution to 
\begin{equation}\label{rhosys1}
    \frac{-\log\rho_\#-\log\beta}{\log\gamma}=-4,
\end{equation}
and $\lambda=\frac{-\log \rho-\log \beta}{\log \gamma}+1$. 

The base case for Lemma \ref{indlemma} easily holds.

Let $M\in \mathbb{N}\backslash \{0\}$. Assume the following: 
\begin{inductive}
    Statements \ref{s1}, \ref{ih.2} and \ref{ih.3} hold for all natural numbers $0<m,n<M$. 
\end{inductive}

Next, we state the strategy for $\pa$: 

\begin{framed}

 \textit{Case I:} If the $n$th level is of Type I, then $\pa$ relabels $B_1\level{n}$ by $B_1\level{n+1}$. Note that this ``action'' of relabeling by $\pa$ does not alter the state of the game.

 \textit{Case II:} If the $n$th level is of Type II, then $\pa$ takes $A_1\level{n}=B(v\level{n},\beta\cdot \tau_1\level{n})$, where $v\level{n}$ is the vertex$\level{n}$ in $B_1\level{n}$, and then $\pa$ relabels $B_2\level{n}$ by $B_1\level{n+1}$.

\textit{Case III:} Suppose that the $n$th level is of Type III. Denote the collection of the *-vertices of order less  than or equal to $n$ by $\mathcal{S}\level{n}$. We will see that by induction $\left|\mathcal{S}\level{n-1}\cap B_1\level{n}\right|\leq \lambda$. For all $k\geq 1$, \textsl{we ask $\pa$ to take the center of $A_k\level{n}$ to be $\begin{cases}
    \text{an element of }\mathcal{S}\level{n-1}\cap B_k\level{n}& \text{if }B_k\level{n}\cap \mathcal{S}\level{n-1}\neq\varnothing ,\\
    \text{the center of }B_k\level{n} & \text{if }B_k\level{n}\cap \mathcal{S}\level{n-1}=\varnothing, 
\end{cases}$ with the maximal allowed radii $\beta\cdot r_k\level{n}$}, until the time 
  $$H\level{n}:= \min \left\{k:\,
           B_k\level{n}\cap \mathcal{S}\level{n-1}=\varnothing \text{ and }
           \diam(B_k\level{n})<\frac{\rho}{\gamma}
        \right\};$$
        we denote the smallest $k$ such that $\diam(B_k\level{n})<\frac{\rho}{\gamma}$ by $L\level{1}$. Note that it takes at most $\lceil \frac{\log \lambda}{\log 2}\rceil$ rounds for Alice to remove all the points in $\mathcal{S}\level{n-1}\cap B_1\level{n}$, so 
        \begin{equation}\label{hnlnsys1}
          H\level{n}\leq L\level{n}+\lceil \frac{\log \lambda}{\log 2}\rceil.   
        \end{equation}
         Note that all cylinders contained $T^{n-1}B_1\level{n}\subset [0,1]$ have diameter $\frac{1}{\gamma}$, so $B_{H\level{n}}\level{n}$ contains at most one vertex$\level{n}$ and two *-vertex$\level{n}$. Alice remove these in two turns. Then 
 \begin{align*}
    \dist(T^{n-1}(B_{H\level{n}+2}\level{n}\backslash A_{H\level{n}+2}\level{n}),\mathcal{S}\level{n})\geq &  \radius (T^{n-1}A_{H\level{n}+2})\\
    \geq &  \beta^{\lceil \frac{\log \lambda}{\log 2}\rceil+3}\cdot \radius(T^{n-1}B_{L\level{n}-1}\level{n})\\
    \geq &  \frac{1}{2} \beta^{\lceil \frac{\log \lambda}{\log 2}\rceil+3}\cdot \frac{\beta}{2\gamma},
 \end{align*}
 so as a consequence, for all $x\in B_1\level{n+1}\subset B_{H\level{n}+2}\level{n}\backslash A_{H\level{n}+2}\level{n})$, one has 
 \begin{equation}\label{Iresult!}
     \dist(T^k x,f\left(c\level{k}\right))\geq \gamma^{-\lambda}\frac{1}{2} \beta^{\lceil \frac{\log \lambda}{\log 2}\rceil+3}\cdot \frac{\beta}{2\gamma}\quad \text{for all }J\level{n}<k\leq n.
 \end{equation}
Additionally, we have 
\begin{align}
   \rho_1\level{n+1}=& \diam \left(T^nB_1\level{n+1}\right)\notag \\
   =& \gamma^n \cdot \diam \left(B_1\level{n+1}\right)\notag \\
   \geq & \gamma^n \cdot \beta \cdot \diam \left(B_{H\level{n}+2}\level{n}\right)\notag \\
   \overset{\eqref{hnlnsys1}}{\geq} & \gamma^n \cdot \beta \cdot \diam \left(B_{\left(L\level{n}-1\right)+\lceil \frac{\log \lambda}{\log 2}\rceil+3}\right)\notag \\
   \geq & \gamma^n \cdot \beta^{\lceil \frac{\log \lambda}{\log 2}\rceil+4}\cdot  \diam\left(B_{L\level{n}-1}\level{n}\right)\notag \\
   =&  \gamma \cdot \beta^{\lceil \frac{\log \lambda}{\log 2}\rceil+4}\cdot  \diam\left(T^{n-1}B_{L\level{n}-1}\level{n}\right)\notag \\
   \geq &  \gamma \cdot \beta^{\lceil \log_2\frac{-\log\rho-\log\beta}{\log\gamma}\rceil+4}\cdot \frac{\rho}{\gamma}\notag \\
   \overset{\eqref{rhosys1}}{=}& \rho. \label{Iradiusproj}
\end{align}

\end{framed}

Now we verify our hypotheses hold in the progression. 

\begin{proof}[Proof of Lemma \ref{indlemma} for System I]
    For Statement \ref{s1}, note that if a ball $B$ contains two vertices$\level{n}$, then the diameter of $T^{n-1}B$ is at least $\frac{1}{\gamma}$, so if $B_1\level{n}$ contains a vertex$\level{n}$ (not of Type I), then either it contains exactly one vertex$\level{n}$ (of Type II) or $T^{n-1}B_1\level{n}\geq \frac{1}{\gamma}$ (of Type III). \par 

By assuming Statements \ref{ih.2} and \ref{ih.3} for $n<M$, if Level $(M-1)$ is Type-III, then Statement \ref{ih.2} for $n=M$ follows immediately from \eqref{Iradiusproj}. 
 Recall that $$J\level{n}=\begin{cases}
        \max \left\{k:\, k<n\text{ and }\text{level}\level{k}\text{ is Type-III}\right\} & \text{if }\left\{k:\, k<n\text{ and }\text{level}\level{k}\text{ is Type-III}\right\}\neq \varnothing,\\
        0& \text{otherwise}. 
    \end{cases}$$

    We now prove Lemma \ref{lemmanoconsec2} for this system and then use it to show Statement \ref{ih.3} holds. 
      
    \begin{proof}[Proof of Lemma \ref{lemmanoconsec2} for System I]
    Assume the contrary; suppose there are two consecutive Type-II levels $n,m$ with $n<m$ without any intermediate Type-III level. Then $B_1\level{n}$ contains a vertex$\level{n}$ $v_1$ and a vertex$\level{m}$ $v_2$, which are distinct. Then $$\diam\left(B_1\level{m}\right)=\beta\cdot \diam\left(B_1\level{n}\right)\geq \beta\cdot \dist\left(v_1,v_2\right)\geq \beta\cdot \frac{1}{\gamma^m},$$ contradicting that Level $m$ is Type-II. 
\end{proof}
    Then by Lemma \ref{lemmanoconsec2}, there is at most one Type-II level between Level $J\level{M}$ and Level $M$. Then 
    \begin{align*}
      \rho_1\level{M}= \diam\left(T^{M-1}B_1\level{M}\right)\geq \beta\cdot \diam\left(T^{M-1}B_1\level{J\level{M}+1}\right)\geq \beta\cdot \diam\left(T^{J\level{M}}B_1\level{J\level{M}+1}\right)\overset{\eqref{Iradiusproj}}{\geq}  \beta\cdot \rho. 
    \end{align*}

    \par

Since Statement \ref{ih.3} is assumed for all $0<n<M$, by the expanding property of $T$, note that for all $0<n<M$, 
    \begin{equation*}
        \diam(T^{n+\lambda-1}B_1\level{n+\lambda})\geq \beta \cdot \gamma^{\lambda-1} \diam(T^n B_1\level{n})=\frac{1}{\gamma}\cdot \frac{\gamma}{\rho\cdot \beta}\cdot \rho=1,
    \end{equation*}
    then $T^{\lceil\lambda\rceil} B_1\level{n}=[0,1]$ since its diameter exceeds $1$, meaning that within $\lambda$ levels any Type-III level $(n)$, there must be another Type-III level and proving Statement \ref{ih.3} for all $n$. 
\end{proof}

From \eqref{Iresult!}, we finally obtain the most crucial result: Proposition \ref{Iresultprop}. We will finish our proof of Theorem \ref{mainthm} in Section \ref{finishproof}.

\subsection{Construction of strategy, II}\label{secproof3}
\label{proofmainthmii}

Before we start our statement of the strategy, we state two well-known properties of System II. 

\begin{lemma}[Expanding property for System II]\label{expandii}
 There exists a constant $R>0$ such that for all positive $n\in \mathbb{N}$ and for all $x\in (0,1)$, 
    \begin{equation*}
      |D_xT^n|\geq R \cdot \left[\left(\frac{1+\sqrt{5}}{2}\right)^2\right]^n.
    \end{equation*}
\end{lemma}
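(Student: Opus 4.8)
The plan is to reduce everything to the chain rule together with the classical exponential growth of the orbit expansion, the mechanism being the golden-ratio fixed point of $T$. Since $T$ is the Gauss map, $|D_xT|=1/x^2$ at every $x\in(0,1)$ where it is defined, i.e. for every $x$ not of the form $1/m$ with $m\in\mathbb{N}$; hence, by the chain rule,
\[
  |D_xT^n|=\prod_{k=0}^{n-1}\frac{1}{(T^kx)^2}
\]
whenever none of $x,Tx,\dots,T^{n-1}x$ is a non-differentiability point of $T$ (the exceptional set is countable, and at its points one uses one-sided derivatives, for which the estimate below holds verbatim; I would suppress this routine caveat). Writing $\varphi:=\frac{1+\sqrt{5}}{2}$, so that $\varphi^{-1}=\varphi-1$ and $1-\varphi^{-1}=2-\varphi=\varphi^{-2}$, it then suffices to prove the pointwise bound $|D_xT^n|\ge\varphi^{2(n-1)}$, which is the lemma with $R=\varphi^{-2}=\frac{3-\sqrt{5}}{2}$.

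First I would record two elementary one-step facts. (i) If $x\le\varphi^{-1}$ then $1/x\ge\varphi$, so $|D_xT|=1/x^2\ge\varphi^2$. (ii) If $x>\varphi^{-1}$ then $x\in(\tfrac12,1)$, so $Tx=1/x-1$ and $x\cdot Tx=1-x<1-\varphi^{-1}=\varphi^{-2}$, whence $|D_xT^2|=\frac{1}{x^2(Tx)^2}=\frac{1}{(x\cdot Tx)^2}>\varphi^{4}$; moreover in case (ii) we have $Tx=1/x-1<\varphi-1=\varphi^{-1}$, so the point immediately after a "deficient'' step lands back in the region of (i). This is the heart of the argument: the fixed point $\varphi-1$ of $T$ has derivative exactly $\varphi^2$ and is the slowest-expanding orbit, and a single deficient factor $1/x^2<\varphi^2$ is always followed by a step so strongly expanding that the two-step product exceeds $\varphi^4$.

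With (i) and (ii) in hand I would prove $|D_xT^n|\ge\varphi^{2(n-1)}$ by strong induction on $n$, using the convention $|D_yT^0|=1$. The cases $n=1$ ($|D_xT|=1/x^2>1$) and $n=2$ (by (i) together with $|D_{Tx}T|>1$, or directly by (ii)) are immediate. For $n\ge3$: if $x\le\varphi^{-1}$, the chain rule and (i) give $|D_xT^n|=|D_xT|\cdot|D_{Tx}T^{n-1}|\ge\varphi^{2}\cdot\varphi^{2(n-2)}=\varphi^{2(n-1)}$ by the inductive hypothesis applied to $n-1$; if $x>\varphi^{-1}$, the chain rule and (ii) give $|D_xT^n|=|D_xT^2|\cdot|D_{T^2x}T^{n-2}|\ge\varphi^{4}\cdot\varphi^{2(n-3)}=\varphi^{2(n-1)}$ by the inductive hypothesis applied to $n-2\ge1$. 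This closes the induction and yields the stated bound.

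The only point that needs genuine care is the compensation step in case (ii) — verifying both that $x>\varphi^{-1}$ forces $Tx<\varphi^{-1}$ and that $\varphi^4$ exactly absorbs the single-step deficit so that the induction closes with the optimal constant; everything else is bookkeeping. As an independent cross-check one can instead run the classical continued-fraction computation: on each cylinder $\Delta(a_1,\dots,a_n)$ the map $T^n$ is the Möbius transformation inverse to $y\mapsto\frac{p_n+p_{n-1}y}{q_n+q_{n-1}y}$, so $|D_xT^n|=(q_n+q_{n-1}T^nx)^2\ge q_n^2$, and the recursion $q_n=a_nq_{n-1}+q_{n-2}\ge q_{n-1}+q_{n-2}$ forces $q_n\ge F_{n+1}\ge\varphi^{n-1}$, giving again $|D_xT^n|\ge\varphi^{2(n-1)}$.
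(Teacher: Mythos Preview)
Your proof is correct. The primary argument you give --- the two-step compensation around the golden-ratio fixed point together with strong induction on $n$ --- is genuinely different from the paper's. The paper instead invokes the continued-fraction machinery: the recursion $q_n=a_nq_{n-1}+q_{n-2}$ with $a_n\ge1$ gives $q_n$ at least the $n$th Fibonacci number, hence $q_n\gtrsim\varphi^{n}$, and then the cylinder-diameter formula $\frac{1}{q_n(q_n+q_{n-1})}$ is combined (somewhat loosely, via the Mean Value Theorem) with the fact that $T^n$ carries each cylinder onto $[0,1]$ to bound $|D_xT^n|$ from below. Your cross-check at the end --- the explicit M\"obius computation $|D_xT^n|=(q_n+q_{n-1}T^nx)^2\ge q_n^2$ on each cylinder --- is precisely the rigorous version of what the paper sketches, and is much closer to the paper's intent than your main argument.

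What your dynamical approach buys: it is self-contained, needing no continued-fraction identities or cylinder formulas; it makes the extremal orbit (the fixed point $\varphi-1$) visibly responsible for the growth rate; and it delivers the explicit constant $R=\varphi^{-2}$. What the paper's approach buys: once the continued-fraction framework is already in place (as it is elsewhere in the paper, for cylinder diameters and bounded distortion), the argument is essentially a one-line appeal to $q_n\ge F_n$, and it ties the expansion rate directly to the cylinder geometry used throughout Section~\ref{secproof3}.
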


For the remaining of the paper, we denote $\psi:=\left(\frac{1+\sqrt{5}}{2}\right)^2$. 

\begin{proof}
    From \cite[Chapter 1, \S2]{khinchincontinuedfrac}, in view of the notations in this book, the denominators of the congregants satisfy the recurrence relation: 
    \begin{equation*}
    \begin{cases}
        q_0=1,q_1=a_1,&\\
        q_n=a_nq_{n-1}+q_{n-2}& \forall \text{ integers } n>1.
        \end{cases}
    \end{equation*}
    together with the facts that $a_n\geq 1$ for all $n$, so $q_n$ is greater than or equal to the $n$th Fibonacci number, and hence $q_n\geq \psi^{n/2}$. By the Mean Value Theorem, diameter of a cylinder$\level{n}$ cannot be less than $\frac{1}{|D_xT^n|}$. 
    The diameter of a cylinder$\level{n}$ is $\frac{1}{q_n\left(q_n+q_{n-1}\right)}$, so the lemma is proved. 
\end{proof}

\begin{lemma}[Bounded distortion for System II]\label{bdii}
    For all positive $n\in \mathbb{N}$ and for all $x,y$ in an $n$th order cylinder, 
    \begin{equation*}
        \frac{1}{4}\leq \frac{|D_xT^n|}{|D_yT^n|}\leq 4.
    \end{equation*}
\end{lemma}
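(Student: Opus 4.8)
The plan is to realize $T^n$ restricted to an $n$th order cylinder as a Möbius transformation with integer coefficients given by the continuants, read off its derivative in closed form, and then compare the closed forms at two points of the same cylinder. Recall that $|D_x T| = 1/x^2$, so $T$ expands, but a naive use of the chain rule $|D_x T^n| = \prod_{k=0}^{n-1}(T^k x)^{-2}$ is awkward to control termwise; the Möbius viewpoint packages all of this at once.

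First I would recall (from the same section of \cite{khinchincontinuedfrac} already cited, i.e. the Gauss algorithm and the continuant recurrence $q_n = a_n q_{n-1} + q_{n-2}$) that if $x \in (0,1)$ has continued fraction expansion $[a_1, a_2, \dots]$ and lies in the cylinder$\level{n}$ determined by $(a_1, \dots, a_n)$, then, writing $t := T^n x = [a_{n+1}, a_{n+2}, \dots] \in [0,1]$ for its tail,
\begin{equation*}
 x = \frac{p_n + p_{n-1} t}{q_n + q_{n-1} t},
\end{equation*}
where $p_n/q_n$ is the $n$th convergent. Differentiating this Möbius map in $t$ and using the standard identity $|p_{n-1} q_n - p_n q_{n-1}| = 1$ gives $\left|\dfrac{\dd x}{\dd t}\right| = (q_n + q_{n-1} t)^{-2}$, hence
\begin{equation*}
 |D_x T^n| = \bigl(q_n + q_{n-1}\, T^n x\bigr)^2 .
\end{equation*}

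Second, given $x, y$ in the same $n$th order cylinder, both $T^n x$ and $T^n y$ lie in $[0,1]$, so
\begin{equation*}
 \frac{|D_x T^n|}{|D_y T^n|} = \left(\frac{q_n + q_{n-1}\, T^n x}{q_n + q_{n-1}\, T^n y}\right)^{\!2}.
\end{equation*}
From the recurrence and $a_k \geq 1$ one has $0 < q_{n-1} \leq q_n$ (with $q_0 = 1 \leq a_1 = q_1$ covering the base index), so both the numerator and denominator of the inner fraction lie in $[q_n, 2q_n]$; therefore the inner fraction lies in $[1/2, 2]$, and squaring yields $\tfrac14 \leq |D_x T^n|/|D_y T^n| \leq 4$.

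I do not expect a genuine obstacle. The only points needing a little care are: confirming the Gauss-algorithm parametrization and the normalization $|p_{n-1} q_n - p_n q_{n-1}| = 1$ (classical); the monotonicity $q_{n-1} \leq q_n$ including $n=1$; and the behavior at the two endpoints of a cylinder, where $t$ degenerates to $0$ or $1$ — but the derivative formula extends continuously there and the bound is stated on a closed interval, so this is harmless. If one prefers to avoid convergents entirely, the same estimate can instead be obtained from Lemma \ref{expandii} together with the mean value theorem by comparing $|D_xT^n|$ and $|D_yT^n|$ through the ratio of cylinder lengths, but the Möbius computation above is the cleanest route.
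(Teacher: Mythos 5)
The paper does not prove Lemma \ref{bdii}; it simply cites it as well known (the reference given is \cite[Section 7.4, Lemma 2]{ergodictheorybookcfs}), so there is no in-paper argument to compare against. Your proof is correct and self-contained, and it is the cleanest route I know: writing $T^n$ on a cylinder$\level{n}$ as the inverse of the M\"obius map $t \mapsto \frac{p_n + p_{n-1}t}{q_n + q_{n-1}t}$, using $|p_{n-1}q_n - p_n q_{n-1}| = 1$ to get $|D_x T^n| = (q_n + q_{n-1}\,T^n x)^2$ exactly, and then noting $0 \le T^n x \le 1$ together with $q_{n-1} \le q_n$ (which holds for all $n \ge 1$ since $q_0 = 1 \le a_1 = q_1$ and $q_n = a_n q_{n-1} + q_{n-2} \ge q_{n-1}$) forces both factors into $[q_n, 2q_n]$. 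This gives the ratio bound $[1/4, 4]$ with nothing wasted; indeed $4$ is exactly the sharp constant your argument produces. The textbook alternative you gesture at — expanding $|D_x T^n| = \prod_{k=0}^{n-1}(T^k x)^{-2}$ and controlling each factor via the cylinder geometry, or equivalently passing through the mean value theorem and the cylinder-length formula $\frac{1}{q_n(q_n + q_{n-1})}$ — is what sources like the cited one typically do, but it needs more bookkeeping to telescope the estimates; the M\"obius computation collapses the whole product into one term. One minor point worth stating explicitly if you write this up: the derivative formula degenerates gracefully at the cylinder endpoints (where $t = 0$ or $t = 1$, i.e.\ $x$ is a vertex$\level{n}$ or vertex$\level{n+1}$), since $(q_n + q_{n-1}t)^2$ is continuous in $t$ on $[0,1]$ and strictly positive, so the one-sided derivatives exist and the bound holds on the closed cylinder.
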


Lemma \ref{bdii} is also well known (see e.g., \cite[Section 7.4, Lemma 2]{ergodictheorybookcfs}).

Let $\rho_\#,\lambda$ be positive solutions to the system 
\begin{equation}\label{IIrholambda}
    \begin{cases}
       \left(\beta/4\right)^{\lceil \frac{\log \lambda}{\log 2}\rceil+6}\geq 2\rho_\#,\\
       \frac{R}{4}\beta \psi^{\lambda-1}\geq \frac{1}{\rho_\#}.
    \end{cases}
\end{equation} One can simply take $\lambda$ large enough such that 
\begin{equation*}
    \frac{4}{R\beta}\cdot \left(\frac{1}{\psi}\right)^{\lambda-1}\leq \frac{1}{2}\cdot \left(\beta/4\right)^{\frac{\log\lambda}{\log2}+6},
\end{equation*}
and $\rho_\#$ to be any number in $\left(\frac{4}{R\beta}\cdot \psi^{1-\lambda},\frac{1}{32}\cdot \left(\beta/4\right)^{7+\frac{\log\lambda}{\log2}}\right)$.
Let $\rho=\min \left\{\rho_1\level{1},\rho_\#\right\}$. 

The base case for Lemma \ref{indlemma} easily holds.

Let $M\in \mathbb{N}\backslash \{0\}$. Assume the following: 
\begin{inductive}
    Statements \ref{s1}, \ref{ih.2} and \ref{ih.3} hold for all natural numbers $0<m,n<M$. 
\end{inductive}

Next, we state the strategy for $\pa$: 

\begin{framed}
    \textit{Case I:} If the $n$th level is of Type I, then $\pa$ relabels $B_1\level{n}$ by $B_1\level{n+1}$.

 \textit{Case II:} If the $n$th level is of Type II, then $\pa$ takes $A_1\level{n}=B(v\level{n},\beta\cdot r_1\level{n})$, where $v\level{n}$ is the vertex$\level{n}$ in $B_1\level{n}$ and $r_1\level{n}$ is the radius of $B_1\level{n}$, and then $\pa$ relabels $B_2\level{n}$ by $B_1\level{n+1}$. 

\textit{Case III:} Suppose that the $n$th level is of Type III.  \par 
$\pa$ first select $A_1\level{n}$ with radius $\beta\cdot \radius(B_1\level{n})$ so that $T^{n-1}A_1\level{n}$ is placed as left as possible in $T^{n-1}B_1\level{n}$. Note that $\rho_1\level{n} \begin{cases}
    <\frac{1}{N\level{n}-1} & \text{if }N\level{1}\neq 1,\\
    \leq \frac{1}{N\level{n}} & \text{if }N\level{1}=1; 
\end{cases}$ in particular, we can conclude that 
\begin{equation}\label{r1nest}
    \rho_1\level{n}<\frac{1}{N\level{n}}.
\end{equation}
In Case III, we need to consider the following: 
\begin{itemize}
    \item \textit{Subcase 1}: \textit{\underline{Suppose $B_1\level{n}\backslash A_1\level{n}$ contains zero or one vertex$\level{n}$, say $v\level{n}$.}} Then $B_2\level{n}$ contains at most $\lambda$ *-vertices from $\mathcal{S}\level{n-1}$, where we denote the collection of the *-vertices of order less than or equal to $n$ by $\mathcal{S}\level{n}$. By induction, $\left|\mathcal{S}\level{n-1}\cap B_1\level{n}\right|\leq \lambda$. $\pa$ then selects each $A_k\level{n}$ in $A_2\level{n},\ldots,A_{1+\lceil\log_2(\lambda+2)\rceil}\level{n}$ with the maximal radii allowed and first centered at $v\level{n}$ and then centered at the *-vertices in $\mathcal{S}\level{n-1}\cap B_2\level{n}$ if $\mathcal{S}\level{n-1}\cap B_k\level{n}\neq \varnothing$, and centered at the center of $B_k\level{n}$ if $\mathcal{S}\level{n-1}\cap B_k\level{n}=\varnothing$. Consequently, for all $J\level{n}<k<n$ where $J\level{n}$ is the last appeared Type III level, we have 
    \begin{align}
        \dist \left(T^{k-1}B_1\level{n+1},T^{-1}(f_k(c\level{k}))\right)=& \dist \left(T^{k-1}B_{2+\lceil\log_2(\lambda+2)\rceil}\level{n},T^{-1}(f_k(c\level{k}))\right)\\
        \geq & \frac{1}{4}\cdot \radius(T^{k-1}A_{1+\lceil\log_2(\lambda+2)\rceil}\level{n})\notag \\
        \geq & \frac{1}{4}\cdot \beta^{1+\lceil\log_2(\lambda+2)\rceil}\cdot \rho_1\level{k} \geq\frac{1}{4}\cdot \beta^{1+\lceil\log_2(\lambda+2)\rceil}\cdot \rho. \notag 
    \end{align}
    Hence consequently, for all $x\in B_1\level{n}=B_{2+\lceil\log_2(\lambda+2)\rceil}\level{n}$, one has 
    \begin{equation}\label{IIresult!1}
        \dist \left(T^k x, f_n\left(c\level{k}\right)\right)\geq \frac{1}{4}\cdot \beta^{1+\lceil\log_2(\lambda+2)\rceil}\cdot \rho. 
    \end{equation}
    Then $\pa$ relabels $B_{2+\lceil\log_2(\lambda+2)\rceil}\level{n}$ by $B_1\level{n+1}$. Note that 
    \begin{align}
        \diam(T^{n-1}B_1\level{n+1})=&\diam \left(T^{n-1}B_{2+\lceil\log_2(\lambda+2)\rceil}\level{n}\right)\notag \\
        \geq & \diam(T^{n-1}B_1\level{n})\cdot \beta^{2+\lceil\log_2(\lambda+2)\rceil}\notag \\
        \geq & \frac{1}{4\left(N\level{n}\right)^2}\cdot \beta^{2+\lceil\log_2(\lambda+2)\rceil}\label{iii-1-lb1}
    \end{align}
    On the other hand, since the ball $T^{n-1}B_1\level{n+1}$ does not contain any vertices$\level{n}$, $T^{n-1}B_1\level{n+1}$ is included in a cylinder $J\level{1}$ of order $1$. Since $J\level{1}$ is to the left of the cylinder$\level{1}$ $\left[\frac{1}{N\level{n}},\frac{1}{N\level{n}-1}\right]$, its diameter will not exceed that of $\left[\frac{1}{N\level{n}},\frac{1}{N\level{n}-1}\right]$; hence $\diam\left(J\level{1}\right)\leq \frac{1}{N\level{n}\left(N\level{n}-1\right)}\leq \frac{2}{\left(N\level{n}\right)^2}$, so together with the bounded distortion property (Lemma \ref{bdii}) and \eqref{iii-1-lb1}, we have that 
    \begin{align}
     \rho_1\level{n}=\frac{\diam(T^{n}B_1\level{n+1})}{\diam\left([0,1]\right)}\geq &  \left[\frac{\diam \left(T^{n-1}B_1\level{n+1}\right)}{\diam\left(J\level{n}\right)}\right]\cdot \frac{1}{4}\notag \\
     \overset{\eqref{iii-1-lb1}}{\geq} &\left[\frac{1}{4\left(N\level{n}\right)^2}\cdot \beta^{2+\lceil\log_2(\lambda+2)\rceil} \middle/ \diam\left(J\level{1}\right)\right]\cdot \frac{1}{4}\notag \\
       \geq & \left[\frac{1}{4\left(N\level{n}\right)^2}\cdot \beta^{2+\lceil\log_2(\lambda+2)\rceil} \middle/ \frac{2}{\left(N\level{n}\right)^2}\right]\cdot \frac{1}{4} \\
       =&\frac{\beta^{2+\lceil\log_2(\lambda+2)\rceil}}{32}\notag\\
       \overset{\eqref{IIrholambda}}{\geq}& \rho.\label{IIradiusproj1}
    \end{align}
\item \textit{Subcase 2:} \textit{\underline{Suppose $B_1\level{n}\backslash A_1\level{1}$ contains more than one vertex$\level{n}$.}} Let $\frac{1}{K\level{n}}$ be the leftmost vertex$\level{1}$ in $T^{n-1}(B_1\level{n}\backslash A_1\level{n})$. Since $T^{n-1}(A_1\level{n})$ is the leftmost subinterval of $T^{n-1}(B_1\level{n})$ and the length of $T^{n-1}(A_1\level{n})$ is at least $\frac{\beta}{4}\cdot \rho_1\level{n}$ by Lemma \ref{bdii}, so $\frac{1}{K\level{1}}>\frac{\beta}{4}\cdot \rho_1\level{n}$. Consequently, we have 
    \begin{align*}
        \frac{1}{N\level{n}}-\frac{1}{K\level{n}}< \diam(T^{n-1}B_1\level{n})=\rho_1\level{n}<\frac{\beta}{4}\cdot \frac{1}{K\level{n}},
    \end{align*}
    so 
    \begin{align}\label{5.3.eqcompareNK}
        \frac{1}{N\level{n}}<(1+\beta/2)\cdot \frac{1}{K\level{n}}.
    \end{align}
    Since $B_1\level{n}$ does not contain any vertex$\level{n-1}$, it contains at most $\lambda$ ($\lambda$ is defined in Lemma) *-vertices of order less than $n$. 
    Then $\pa$ takes the radius of $A_k\level{n}$ to be $\beta\cdot r_k\level{n}$ (maximal allowed radius) and its center to be 
    \begin{align*}
        \begin{cases}
    \text{an element of }\mathcal{S}\level{n-1}\cap B_k\level{n}& \text{if }B_k\level{n}\cap \mathcal{S}\level{n-1}\neq\varnothing ,\\
    \text{the center of }B_k\level{n} & \text{if }B_k\level{n}\cap \mathcal{S}\level{n-1}=\varnothing, 
\end{cases}
    \end{align*}
    until the time (until $\pa$'s turn $A_{H\level{n}}\level{n}$)
    \begin{align}\label{5.3.eqevent}
      H\level{n}:= \min \left\{k:\,
            k> \left|\mathcal{S}\level{n}\right|+1 \text{ and }
            \diam(T^{n-1}B_k\level{n})<\frac{\beta^2}{4(N\level{n})^2}
        \right\};
    \end{align}
    we denote the smallest $k$ for which $\diam(T^{n-1}B_k\level{n})<\frac{\beta^2}{4(N\level{n})^2}$ by $L\level{n}$. It is easy to observe that $H\level{n}\leq L\level{n}+\lceil\frac{\log\lambda}{\log 2}\rceil$. Additionally, for all cylinders$\level{1}$ $J$ contained in $T^{n-1}(B_1\level{n}\backslash A_1\level{n})$, the cylinder $J$ is contained in $\left[\frac{1}{K\level{n}},\frac{1}{N\level{n}}\right]$. By \eqref{5.3.eqcompareNK}, in particular, $\frac{1}{N\level{n}}<\left(1+\beta/2\right)\cdot \frac{1}{K\level{n}}<(1+1)\cdot \frac{1}{K\level{n}}$ so  $\left[\frac{1}{K\level{n}},\frac{1}{N\level{n}}\right]\subset \left[\frac{1}{2N\level{n}},\frac{1}{N\level{n}}\right]$, and hence 
    \begin{equation*}
        \frac{1}{2N\level{n}}\cdot \frac{1}{2N\level{n}}<\diam(J)<\frac{1}{N\level{n}}\cdot \frac{1}{N\level{n}}, 
    \end{equation*}
    and hence $B_{L\level{n}}\level{n}$ does not contain any cylinder$\level{n}$. Therefore, $B_{L\level{n}+\lceil\frac{\log\lambda}{\log 2}\rceil}\level{n}$ contains at most one vertex$\level{n}$, say $v\level{n}$, and at most two *-vertices$\level{n}$, say $s_1\level{n},s_2\level{n}$, which Alice will removes them from the game as centers of $A_{H\level{n}+1}$ and $A_{H\level{n}+2}$; i.e., $A_{H\level{n}+1}=B(v\level{n},\beta\rho_{H\level{n}+1}\level{n})$ and $A_{H\level{n}+2}=B(s_i\level{n},\beta\rho_{H\level{n}+2}\level{n})$ where $s_i\level{n}\in B_{H\level{n}+1}\backslash A_{H\level{n}+1}$. \par

  Then $\pa$ relabels $B_{H\level{n}+3}\level{n}$ by $B_1\level{n+1}$. We find
    \begin{align*}
    \dist\left(T^{n-1}B_1\level{n+1},f_n(c\level{n})\right)\geq &
        \dist\left(T^{n-1}\left(B_{H\level{n}+2}\level{n}\backslash A_{H\level{n}+2}\level{n}\right),f_n\left(c\level{n}\right)\right)\\
        \geq & \frac{1}{4}\cdot  \radius\left(T^{n-1}A_{L\level{n}+\lceil\frac{\log\lambda}{\log 2}\rceil+2}\right)\\
        \geq & \beta^{\lceil\frac{\log\lambda}{\log 2}\rceil+4}\cdot \radius\left(T^{n-1}B_{L\level{n}-1}\level{n}\right)\\
        \geq & \beta^{\lceil\frac{\log\lambda}{\log 2}\rceil+4}\cdot \frac{\beta^2}{8(N\level{n})^2}\\
        \overset{\eqref{r1nest}}{\geq}& \frac{\beta^{\lceil\frac{\log\lambda}{\log 2}\rceil+6}}{8}\cdot (\rho_1\level{n}/2)^2\\
        \overset{\text{Statement \ref{ih.2}}}{\geq }& \frac{\beta^{\lceil\frac{\log\lambda}{\log 2}\rceil+6}}{8}\cdot (\rho\beta/2)^2.
    \end{align*}
Additionally, for all $J\level{n}<k< n$, it holds that 
\begin{align*}
  \dist\left(T^{k-1}B_1\level{n+1},f_k\left(c\level{k}\right)\right) \geq & \dist\left(T^{k-1} B_{\lceil\log\lambda/\log 2\rceil}\level{J\level{n}+1},f_k\left(c\level{k}\right)\right)\\
  \geq & \radius\left(T^{k-1}A_{\lceil\log\lambda/\log 2\rceil+1}\level{J\level{n}+1}\right)\\
  \geq & \frac{1}{4}\cdot \radius \left(T^{k-1}A_1\level{J\level{n}+1}\right)\cdot \beta^{\lceil\log\lambda/\log 2\rceil}\\
  \geq & \frac{1}{4}\cdot \radius \left(T^{J\level{n}}A_1\level{J\level{n}+1}\right)\cdot \beta^{\lceil\log\lambda/\log 2\rceil}\\
  \geq & \frac{1}{16}\cdot \radius \left(T\level{J\level{n}}B_1\level{J\level{n}+1}\right)\cdot \beta^{\lceil\log\lambda/\log 2\rceil+1}\\
  \overset{\text{Statement \ref{ih.2}}}{\geq} & \frac{1}{16}\cdot \beta\rho\cdot \beta^{\lceil\log\lambda/\log 2\rceil+1}. 
\end{align*} 
Consequently, for all $x\in B_1\level{n+1}$ and for all $J\level{n}<k\leq  n$, we have 
\begin{equation}\label{IIresult!}
    \dist\left(T^{k-1}x,f_k\left(c\level{k}\right)\right)\geq \min \left\{\frac{\beta^{\lceil\frac{\log\lambda}{\log 2}\rceil+6}}{8}\cdot (\rho\beta/2)^2,\frac{1}{16}\cdot \beta\rho\cdot \beta^{\lceil\log\lambda/\log 2\rceil+1}\right\}. 
\end{equation}
\par
Finally, note that 
\begin{align*}
   \rho_1\level{n+1}=& \diam\left(T^nB_1\level{n+1}\right)\\
   \geq & \frac{1}{4}  \cdot \beta\cdot   \diam\left(T^{n}B_{H\level{n}+2}\right)\\
   =& \frac{1}{4} \cdot \beta \cdot \diam \left(T^{n}B_{\left(L\level{n}-1\right)+\lceil \frac{\log \lambda}{\log 2}\rceil+3}\right)\\
   \geq & \frac{1}{4} \cdot \beta \cdot \diam\left(B_{\left(L\level{n}-1\right)+\lceil \frac{\log \lambda}{\log 2}\rceil+3}\right)\cdot \frac{1}{R}\cdot \psi^{n-1}\\
\geq &4^{-\lceil \frac{\log \lambda}{\log 2}\rceil+3}\cdot \beta^{\lceil \frac{\log \lambda}{\log 2}\rceil+4}\cdot  \diam\left(T^{n}B_{L\level{n}-1}\level{n}\right).
\end{align*}
Note that $T^{n-1}B_{L\level{n}-1}\level{n}$ is included in a cylinder $J\level{1}$ of order $1$ with diameter not exceeding that of $\left[\frac{1}{N\level{n}},\frac{1}{N\level{n}-1}\right]$ and $\diam\left(J\level{1}\right)=\frac{1}{N\level{n}\left(N\level{n}-1\right)}\leq \frac{2}{\left(N\level{n}\right)^2}$. Hence by Lemma \ref{bdii}, we have 
\begin{align}
    \rho_1\level{n+1}
    =& \frac{\diam\left(T^n B_1\level{n+1}\right)}{\diam\left([0,1]\right)}\notag \\
    \geq & \left[\diam\left(T^{n-1}B_1\level{n+1}\right)\middle/\diam\left(J\level{1}\right)\right]\cdot \frac{1}{4}\notag \\
    \geq & \left[\left(1/4\right)^{\lceil \frac{\log \lambda}{\log 2}\rceil+4}\cdot \beta^{\lceil \frac{\log \lambda}{\log 2}\rceil+4}\cdot \diam\left(T^{n-1}B_{L\level{n}-1}\level{n}\right)\middle/\diam\left(J\level{1}\right)\right]\cdot \frac{1}{4}\notag \\
    \geq & \left[\frac{\beta^2}{4\left(N\level{n}\right)^2}\middle/\diam\left(J\level{1}\right)\right]\cdot \left(1/4\right)^{\lceil \frac{\log \lambda}{\log 2}\rceil+4}\cdot \beta^{\lceil \frac{\log \lambda}{\log 2}\rceil+4}\cdot \frac{1}{4}\notag \\
    \geq & \left[\frac{\beta^2}{4\left(N\level{n}\right)^2}\middle/\frac{2}{\left(N\level{n}\right)^2}\right]\cdot \left(1/4\right)^{\lceil \frac{\log \lambda}{\log 2}\rceil+4}\cdot \beta^{\lceil \frac{\log \lambda}{\log 2}\rceil+4}\cdot \frac{1}{4}\\
    =& \frac{1}{2}\cdot 4^{-\lceil \frac{\log \lambda}{\log 2}\rceil-6}\cdot \beta^{\lceil \frac{\log \lambda}{\log 2}\rceil+6}\notag \\
    \overset{\eqref{IIrholambda}}{\geq} & \rho. \label{IIradiusproj2}
\end{align}
\end{itemize}
\end{framed}

Now we verify our hypothesis hold in the progression.

 We first prove Lemma \ref{lemmanoconsec2} for this system and then use it to show Statement \ref{ih.3} holds. 

\begin{proof}[Proof of Lemma \ref{indlemma} for System II]

    For Statement \ref{s1}, note that if a ball $B$ contains two vertices$\level{n}$, $T^{n-1}B$ contains a cylinder of order $n$, so $\diam\left(T^{n-1}B\right)\geq \frac{1}{N\level{n}\left(N\level{n}+1\right)}$ where $\frac{1}{N\level{n}}$ is the rightmost vertex$\level{1}$ in $T^{n-1}B$. Hence, if $B_1\level{n}$ is not of Type I, then either it is Type II, or $\diam\left(T^{n-1}B_1\level{n}\right)\geq \frac{1}{N\level{n}\left(N\level{n}+1\right)}\geq \frac{\beta^2}{4\left(N\level{n}\right)^2}$  (of Type III). \par 
    By assuming Statements \ref{ih.2} and \ref{ih.3} for $n<M$, if Level $(M-1)$ is Type-III, then Statement \ref{ih.2} for $n=M$ follows immediately from \eqref{IIradiusproj1} and \eqref{IIradiusproj2}. Next, we show that Lemma \ref{lemmanoconsec2} also holds for System II.

    \begin{proof}[Proof of Lemma \ref{lemmanoconsec2} for II]
        Assume the contrary; suppose there are two consecutive Type-II levels, say $n$ and $m$ with $n<m$, without any intermediate Type-III level. Since $n,m$ are consecutive Type-II levels without any intermediate Type-III level, we know that $B_2\level{n}=B_1\level{n+1}=B_1\level{m}$.
        %and level $n+1$ cannot be Type-III. 
        Let $v_1$ be the vertex$\level{n}$ in $B_1\level{n}$ and let $v_2$ be the vertex$\level{m}$ in $B_1\level{m}$; denote $N_1:=\frac{1}{T^{n-1}v_1}$ and $N_2:=\frac{1}{T^{m-1}v_2}$.
        Then we have two cases to consider, depending on which connected component of $B_1\level{n}\backslash A_1\level{n}=B_1\level{n}\backslash B\left(v_1,\beta\cdot \frac{\rho_1\level{n}}{2}\right)$ Bob picks to place $B_2\level{n}$. 
        \begin{itemize}
            \item \textbf{Case 1: Suppose that $T^{n-1}B_2\level{n}$ contains a neighborhood of $0$.} Then
\begin{align*}
            \diam\left(T^{m-1}B_1\level{m}\right)\geq & \diam\left(\left(0,\frac{1}{N_2}\right]\right)\cdot \frac{\beta}{4} =\frac{1}{N_2}\cdot \frac{\beta}{4}, 
        \end{align*}
        but this contradicts that level $m$ is of Type II. 
            \item \textbf{Case 2: Suppose that $T^{n-1}B_2\level{n}$ contains a neighborhood of $1$.} Then  
            \begin{align*}
            \diam\left(T^{m-1}B_1\level{m}\right)\geq & \diam\left(\left[\frac{1}{N_2},1\right)\right)\geq \diam\left(\left[\frac{1}{2},1\right)\right)\cdot \frac{\beta}{4} \geq \frac{1}{N_2}\cdot \frac{\beta}{4}, 
        \end{align*}
        but this contradicts that level $m$ is of Type II.
        \end{itemize}
        Therefore, we have proved the lemma by contradiction. 
    \end{proof}

    Then by Lemma \ref{lemmanoconsec2} for System II, there is at most one Type-II level between level $J\level{M}$ and level $M$, so 
    \begin{equation*}
        \rho_1\level{M}\geq \beta\cdot \diam \left(T^{J\level{M}}B_1\level{J\level{M}+1}\right)\overset{\eqref{IIradiusproj1}, \eqref{IIradiusproj2}}{\geq} \beta\cdot \rho. 
    \end{equation*}\par

Since Statement \ref{ih.3} is assumed for all $0<n<M$, by the expanding property of $T$ (Lemma \ref{expandii}), note that for all $0<n<M$,
\begin{equation*}
        \diam(T^{n+\lambda-1}B_1\level{n+\lambda})\geq \beta \cdot \frac{1}{4}\cdot R\cdot \psi^{\lambda-1}\cdot  \diam\left(T^n B_1\level{n}\right)\geq \beta \cdot \frac{1}{4}\cdot R\cdot \psi^{\lambda-1}\cdot \rho\geq 1. 
    \end{equation*}
    meaning that $T^{\lceil\lambda\rceil} B_1\level{n}=[0,1]$ by its diameter, so within $\lambda$ levels any Type-III level $(n)$, there must be another Type-III level and proving Statement \ref{ih.3} for all $n$. 
\end{proof}

From \eqref{IIresult!1} and \eqref{IIresult!}, again we obtain the most crucial result: Proposition \ref{Iresultprop}. We will finish our proof of Theorem \ref{mainthm} in the next section. 

    \subsection{Completing the proofs}\label{finishproof}

  We are now ready to prove Theorem \ref{mainthm}.
  
  Let $\{w\}=\bigcap_{k,n} B_{k}\level{n}$. Then by Proposition \ref{Iresultprop}, there exists some $\delta>0$ such that 
  \begin{equation*}
      \dist\left(T^n w,c\level{n}\right)\geq \delta \quad \forall n\in \mathbb{N}. 
  \end{equation*}
  On the other hand, since the sequence $\left(f_n\right)_n$ is equicontinuous, there exists some $\delta'>0$ so that as long as $|x-y|<\delta'$, $\left|f_n(x)-f_n(y)\right|<\frac{\delta}{2}$.
  
 Since the sequence $\left(c^{n}\right)_n$ converges to $w$, we may find $N$ large enough such that for all $n\geq N$, we have $\dist\left(c\level{n},w\right)<\delta'$. Hence for all $n\geq N$
 \begin{equation*}
      \dist\left(T^n w,f_n(w)\right)\geq \dist\left(T^n w,f_n\left(c\level{n}\right)\right)-\dist\left(f_n\left(c\level{n}\right),f_n(w)\right) \geq \frac{\delta}{2} \quad \forall n\in \mathbb{N}. 
  \end{equation*}

\bibliographystyle{alpha}
\bibliography{mybib}

\end{document}